\theoremstyle{plain}
\newtheorem{thm}{\protect\theoremname}[section]
  \theoremstyle{plain}
  \newtheorem{question}[thm]{\protect\questionname}
  \theoremstyle{definition}
  \newtheorem{defn}[thm]{\protect\definitionname}
  \theoremstyle{plain}
  \newtheorem{lem}[thm]{\protect\lemmaname}
\newcommand{\lyxaddress}[1]{
\par {\raggedright #1
\vspace{1.4em}
\noindent\par}
}
\date{}
  \providecommand{\definitionname}{Definition}
  \providecommand{\lemmaname}{Lemma}
  \providecommand{\questionname}{Question}
\providecommand{\theoremname}{Theorem}
\begin{document}

\title{\textsc{Noether's Problem for Some Subgroups}\\
\textsc{of $S_{14}$: the Modular Case}}

\author{\textsc{Hang Fu $\cdot$ Ming-chang Kang $\cdot$ Baoshan Wang $\cdot$
Jian Zhou}}
\maketitle
\begin{quote}
\textbf{\small{}Abstract.}{\small{} Let $G$ be a subgroup of $S_{n}$,
the symmetric group of degree $n$. For any field $k$, $G$ acts
naturally on the rational function field $k(x_{1},\cdots,x_{n})$
via $k$-automorphisms defined by $\sigma\cdot x_{i}:=x_{\sigma\cdot i}$
for any $\sigma\in G$ and $1\leq i\leq n$. In this article, we will
show that if $G$ is a solvable transitive subgroup of $S_{14}$ and
$\textup{char}(k)=7$, then the fixed subfield $k(x_{1},\cdots,x_{14})^{G}$
is rational (i.e., purely transcendental) over $k$. In proving the
above theorem, we rely on the Kuniyoshi\textendash Gasch\"utz Theorem
or some ideas in its proof.}{\small \par}

\textbf{\small{}Keywords.}{\small{} Noether's problem $\cdot$ L\"uroth's
problem $\cdot$ Rationality problem $\cdot$ Kuniyoshi's Theorem
$\cdot$ Gasch\"utz's Theorem}{\small \par}

\textbf{\small{}Mathematics Subject Classification.}{\small{} 13A50
$\cdot$ 14E08}{\small \par}
\end{quote}

\section{\label{sec1}Introduction}

Let $k$ be a field, $L$ a finitely generated extension field of
$k$. The field $L$ is called \textit{$k$-rational} (or rational
over $k$) if $L$ is purely transcendental over $k$, i.e., $L$
is $k$-isomorphic to the quotient field of some polynomial ring over
$k$. $L$ is called \textit{stably $k$-rational} if $L(x_{1},\cdots,x_{n})$
is $k$-rational for some $x_{1},\cdots,x_{n}$, which are algebraically
independent over $L$. $L$ is called \textit{$k$-unirational} if
$L$ is $k$-isomorphic to a subfield of some $k$-rational field.
It is obvious that ``$k$-rational'' $\Rightarrow$ ``stably $k$-rational''
$\Rightarrow$ ``$k$-unirational''. L\"uroth's problem asks, under
what situations, the converse is true, i.e., ``$k$-unirational''
$\Rightarrow$ ``$k$-rational''. For a survey of this famous problem
in algebraic geometry, see \cite{MT}.

Let $G$ be a finite group, $k$ a field, $V$ a finite-dimensional
vector space over $k$, and $\rho:G\to\textup{GL}(V)$ a faithful
representation of $G$. Then $G$ acts on the function field $k(V)$
by $k$-automorphisms. Noether's problem asks, under what situations,
the fixed subfield $k(V)^{G}:=\{f\in k(V):\sigma\cdot f=f\textup{ for any }\sigma\in G\}$
is $k$-rational. Noether's problem is a special form of L\"uroth's
problem. It is also related to the inverse Galois problem. See \cite{Sw}
for details.

When $\rho:G\to\textup{GL}(V_{\textup{reg}})$ is the regular representation
of $G$ over $k$, we will write $k(G)$ for the fixed subfield $k(V_{\textup{reg}})^{G}$.
Explicitly, let $G$ act on the rational function field $k(x_{g}:g\in G)$
by $h\cdot x_{g}:=x_{hg}$ for any $g,h\in G$. Then $k(G):=k(x_{g}:g\in G)^{G}$,
the fixed subfield of $k(x_{g}:g\in G)$.

When $G$ is a subgroup of $S_{n}$, the symmetric group of degree
$n$, the permutation representation associated to $G$ induces a
natural action of $G$ on the rational function field $k(x_{1},\cdots,x_{n})$
via $k$-automorphisms defined by $\sigma\cdot x_{i}:=x_{\sigma\cdot i}$
for any $\sigma\in G$ and $1\leq i\leq n$. Noether's problem becomes
the form: whether the fixed subfield $k(x_{1},\cdots,x_{n})^{G}:=\{f\in k(x_{1},\cdots,x_{n}):\sigma\cdot f=f\textup{ for any }\sigma\in G\}$
is $k$-rational. The main purpose of this paper is to study the $k$-rationality
of $k(x_{1},\cdots,x_{n})^{G}$, where $k$ is a field, $G$ is a
transitive subgroup of $S_{n}$, and $n$ is a ``small'' positive
integer.

Recall some previously known results.
\begin{thm}
\label{t1.1} Let $k$ be a field, $S_{n}$ the symmetric group of
degree $n$, and $k(x_{1},\cdots,x_{n})$ the rational function field
of $n$ variables over $k$. Suppose that $G\leq S_{n}$ acts on $k(x_{1},\cdots,x_{n})$
by $k$-automorphisms defined by $\sigma\cdot x_{i}:=x_{\sigma\cdot i}$
for any $\sigma\in G$ and $1\leq i\leq n$.
\begin{enumerate}
\item \textup{\cite[Theorem 1.3]{KW}} If $1\leq n\leq5$, then $k(x_{1},\cdots,x_{n})^{G}$
is $k$-rational for any subgroup $G\leq S_{n}$.
\item \textup{\cite[Theorem 1.2]{KWZ}} If $n=6$ and $G$ is a transitive
subgroup of $S_{6}$ other than $\textup{PSL}_{2}(\mathbb{F}_{5})$,
$\textup{PGL}_{2}(\mathbb{F}_{5})$, or $A_{6}$, then $k(x_{1},\cdots,x_{6})^{G}$
is $k$-rational.
\item \textup{\cite[Theorem 1.4]{KW}} If $n=7$ and $G$ is a transitive
subgroup of $S_{7}$ other than $\textup{PSL}_{2}(\mathbb{F}_{7})$
or $A_{7}$, then $k(x_{1},\cdots,x_{7})^{G}$ is $k$-rational. If
$k\supseteq\mathbb{Q}(\sqrt{-7})$, then $k(x_{1},\cdots,x_{7})^{\textup{PSL}_{2}(\mathbb{F}_{7})}$
is $k$-rational.
\item \textup{\cite[Theorem 1.2]{WZ}} If $n=8$ and $G$ is a solvable
transitive subgroup of $S_{8}$ other than $C_{8}$, then $k(x_{1},\cdots,x_{8})^{G}$
is $k$-rational.
\item \textup{\cite[Theorem 1.5]{KW}} If $n=11$ and $G$ is a solvable
transitive subgroup of $S_{11}$, then $k(x_{1},\cdots,x_{11})^{G}$
is $k$-rational.
\end{enumerate}
\end{thm}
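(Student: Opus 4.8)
The five items are each already established in the references cited in the statement, so at the literal level the proof is an appeal to those papers; what I would reconstruct is the common strategy underlying all of them, since the present paper refines exactly that machinery. The plan is first to invoke the classification of transitive subgroups of $S_n$ for the relevant small $n$, then, for each group $G$, to analyze the permutation module $\bigoplus_{i=1}^{n} k\,x_i$ together with its lattice of $G$-submodules, and finally to decide the rationality of $k(x_1,\dots,x_n)^{G}$ by a sequence of reductions, falling back on special rationality theorems in the hard cases.

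The single most useful reduction is the no-name lemma (linearization): if $G$ acts on $L(z_1,\dots,z_m)$ with $L$ a $G$-stable subfield and the $z_i$ spanning an $L$-subspace on which $G$ acts semilinearly, then $L(z_1,\dots,z_m)^{G}\cong L^{G}(t_1,\dots,t_m)$ with $t_1,\dots,t_m$ algebraically independent over $L^{G}$; hence rationality descends to the smaller field $L^{G}$. Applied to the permutation module one peels off the invariant line spanned by $x_1+\dots+x_n$ (or, when $\operatorname{char}(k)\mid n$, a characteristic-adapted invariant) and iterates down a chief series of the solvable group $G$, reducing in stages to questions about $k(A)$ for the abelian chief factors $A$. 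At the bottom one appeals to Kuniyoshi's theorem in characteristic $p$ for $p$-groups, to Gasch\"utz's theorem and its variants for the intervening extensions, and to explicit transcendence bases for cyclic, dihedral, metacyclic and Frobenius groups; assembling these yields $k$-rationality for every group that survives the descent, which covers all of $S_n$ for $n\le 5$ and all solvable transitive groups for $n=11$.

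The genuinely hard part, and the reason for the exclusions in items (ii)--(iv), is the exceptional groups. The groups $\textup{PSL}_2(\mathbb{F}_5)\cong A_5$, $\textup{PSL}_2(\mathbb{F}_7)$, $A_6$ and $A_7$ are non-solvable, so the chief-series descent collapses and no elementary transcendence basis presents itself; for these, rationality is either open or accessible only after enlarging the base field, which is precisely the role of the hypothesis $k\supseteq\mathbb{Q}(\sqrt{-7})$ for $\textup{PSL}_2(\mathbb{F}_7)$. The group $C_8$ is excluded for the opposite reason: $k(C_8)$ is \emph{not} $k$-rational for $k=\mathbb{Q}$, a classical negative answer to Noether's problem, so no proof can exist there. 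I would therefore expect the main effort to lie not in the routine solvable cases but in isolating these exceptional groups and treating each on its own terms.
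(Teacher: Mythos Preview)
The paper offers no proof of this theorem: each of the five items is stated with an explicit citation (\cite{KW}, \cite{KWZ}, \cite{WZ}) and the theorem functions purely as a summary of prior literature, with no argument given or even sketched. You correctly identify this at the outset, so at the level of what the paper actually does, your proposal matches.

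Your additional reconstruction of the strategy behind those cited papers is reasonable in spirit but somewhat idealized. The actual proofs in \cite{KW}, \cite{KWZ}, \cite{WZ} are less uniform than a clean ``descend a chief series via the no-name lemma'' picture suggests: they proceed group by group from the classification of transitive subgroups, and the reductions are often ad hoc changes of variable (of the type seen later in this paper in Lemma~\ref{l2.8}, Theorem~\ref{t2.4}, and Sections~\ref{sec5.2}--\ref{sec5.4}) rather than an iterated application of a single structural lemma. Your remarks on the excluded groups are accurate and agree with the paper's own commentary following the theorem (in particular the $C_8$ obstruction and the role of $\mathbb{Q}(\sqrt{-7})$), though note that $\textup{PGL}_2(\mathbb{F}_5)$ in item~(ii) is also excluded and is solvable-free but not simple, so ``non-solvable hence chief-series descent collapses'' is a slight oversimplification of why those particular groups resist the method.
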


We remark that, (i) for any subgroup $G\leq S_{6}$ other than $A_{6}$,
$\mathbb{C}(x_{1},\cdots,x_{6})^{G}$ is $\mathbb{C}$-rational, and
$k(x_{1},\cdots,x_{6})^{G}$ is stably $k$-rational (for any field
$k$); (ii) for any transitive subgroup $G\leq S_{7}$ other than
$A_{7}$, $\mathbb{C}(x_{1},\cdots,x_{7})^{G}$ is $\mathbb{C}$-rational;
(iii) $k(x_{1},\cdots,x_{8})^{C_{8}}$ is $k$-rational if and only
if either $\textup{char}(k)=2$ or $[k(\zeta_{8}):k]\leq2$ (if $\textup{char}(k)\neq2$).
For details, see \cite{KW}, \cite{KWZ}, \cite[Proposition 3.9]{EM},
and \cite[Theorem 5.11]{Sa}. We also remark that the rationality
problem for the transitive subgroups of $S_{10}$ was solved and a
manuscript is in preparation.

This paper arose when we attempted to solve the rationality problem
for the subgroups of $S_{14}$. We note that this problem was investigated
in \cite{WW} for some subgroups. As noted in \cite[Theorem 3.2]{KW},
extra efforts are required in the situation when $\textup{char}(k)=p>0$
and the group order $|G|$ is divisible by $p$. If $G$ is a $p$-group,
such a ``modular'' case may be solved by the classical theorem of
Kuniyoshi and Gasch\"utz, which is recalled in the following theorem.
\begin{thm}
\label{t1.2} Suppose that $k$ is a field with $\textup{char}(k)=p>0$
and $G$ is a $p$-group.
\begin{enumerate}
\item \textup{\cite{Ku2,Ku1,Ku3}} The fixed subfield $k(G)$ is $k$-rational.
\item \textup{\cite{Ga}} For any faithful representation $\rho:G\to\textup{GL}(V)$,
where $V$ is a finite-dimensional vector space over $k$, the fixed
subfield $k(V)^{G}$ is $k$-rational.
\end{enumerate}
\end{thm}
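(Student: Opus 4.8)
The plan is to deduce (i) from (ii), reduce the representation to unitriangular form, and then invoke a lemma on ``triangular additive'' actions.

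Since the regular representation $V_{\textup{reg}}$ is faithful, (i) is the special case $V=V_{\textup{reg}}$ of (ii), so I would concentrate on (ii). Let $\rho\colon G\to\textup{GL}(V)$ be faithful with $\dim_k V=n$. As $G$ is a $p$-group and $\textup{char}(k)=p$, the augmentation ideal of $kG$ is nilpotent, so $kG$ is local and the trivial module is its only simple module. Hence $V^{*}$ carries a complete flag of $G$-submodules $0=U_0\subset U_1\subset\cdots\subset U_n=V^{*}$ with one-dimensional trivial quotients; picking $x_i\in U_i\setminus U_{i-1}$ yields a basis with $\langle x_1,\dots,x_i\rangle=U_i$ and
\[ \sigma(x_i)=x_i+\sum_{j<i}c_{ij}(\sigma)\,x_j,\qquad c_{ij}(\sigma)\in k. \]
Because $k(V)$ is the fraction field of $\textup{Sym}(V^{*})=k[x_1,\dots,x_n]$, we obtain $k(V)=k(x_1,\dots,x_n)$ with $G$ acting by this unitriangular linear action.

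The core is the following lemma, which I would state and prove on its own: if a finite group $G$ (which we may assume acts faithfully) acts on $k(x_1,\dots,x_n)$ so that each $F_i:=k(x_1,\dots,x_i)$ is $G$-stable and $\sigma(x_i)-x_i\in F_{i-1}$ for all $\sigma$ and $i$, then $k(x_1,\dots,x_n)^{G}$ is $k$-rational. The unitriangular action above meets these hypotheses verbatim, with $\sigma(x_n)-x_n=\sum_{j<n}c_{nj}(\sigma)x_j\in F_{n-1}$; so the lemma settles (ii). A pleasant feature is that one works directly with the homogeneous linear action, with no dehomogenization.

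To prove the lemma I would induct on $n$. By induction $E:=F_{n-1}^{G}$ is $k$-rational of transcendence degree $n-1$, so it remains to adjoin $x_n$, where $c_n(\sigma):=\sigma(x_n)-x_n$ defines a $1$-cocycle $c_n\colon G\to(F_{n-1},+)$. Let $H\trianglelefteq G$ be the kernel of the action on $F_{n-1}$. For $\sigma\in H$ the elements $c_n(\sigma)$ form a finite $\mathbb{F}_p$-subspace $A\subseteq F_{n-1}$, and $F_{n-1}(x_n)^{H}=F_{n-1}(\theta)$ for the linearized polynomial $\theta:=\prod_{a\in A}(x_n-a)$. One verifies that $A$ is $G$-stable, whence $\sigma(\theta)=\theta(x_n+c_n(\sigma))=\theta+\theta(c_n(\sigma))$ by additivity of $\theta$; thus $d(\sigma):=\theta(c_n(\sigma))$ is a $1$-cocycle vanishing on $H$, hence inflated from the faithful quotient $G/H$ acting on $F_{n-1}$. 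Additive Hilbert 90 (the normal basis theorem) gives $H^{1}(G/H,(F_{n-1},+))=0$, so $d(\sigma)=\sigma(e)-e$ for some $e\in F_{n-1}$; then $w:=\theta-e$ is $G$-fixed, $F_{n-1}(x_n)^{H}=F_{n-1}(w)$, and $k(x_1,\dots,x_n)^{G}=F_{n-1}(w)^{G/H}=E(w)$ is $k$-rational of transcendence degree $n$.

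The main obstacle is exactly the modular content of the cocycle $c_n$: when $G$ acts non-faithfully on $F_{n-1}$, the cocycle need not be a coboundary, so $x_n$ cannot be straightened into a fixed variable outright. The resolution is the linearized polynomial $\theta=\prod_{a\in A}(x_n-a)$, which absorbs the non-faithful part $H$ --- here $\textup{char}(k)=p$ enters essentially, through the additivity of $p$-polynomials --- and leaves a faithful $G/H$-action amenable to additive Hilbert 90. Carrying out this two-step descent, and checking the $G$-stability of $A$ together with the cocycle property of $d$, is where the genuine work lies.
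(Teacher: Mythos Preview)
The paper does not prove Theorem~\ref{t1.2}; it is quoted as a classical result with references to Kuniyoshi and Gasch\"utz, so there is no ``paper's proof'' to compare against. Your argument is correct and is essentially the original Gasch\"utz proof: use the fact that $kG$ is local for a $p$-group in characteristic $p$ to put the action in unitriangular form, and then strip off variables one at a time, absorbing the non-faithful kernel $H$ at each stage with the additive polynomial $\theta=\prod_{a\in A}(x_n-a)$ and killing the residual cocycle on $G/H$ via additive Hilbert~90.

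Two small points worth making explicit when you write it up. First, the equality $F_{n-1}(x_n)^{H}=F_{n-1}(\theta)$ needs $|A|=|H|$, i.e.\ that $c_n|_H$ is injective; this holds because the global action on $k(x_1,\dots,x_n)$ is faithful and $H$ already acts trivially on $F_{n-1}$. Second, the induction hypothesis is invoked for the (faithful) action of $G/H$ on $F_{n-1}$, not for $G$ itself; you allude to this but it is cleaner to say it outright. For context within the paper, note that Theorem~\ref{t2.1} packages the ``peel off linear variables over a faithful base'' step you use, and the new proof of Theorem~\ref{t2.4} in Section~\ref{sec5.5} is a concrete instance of the same mechanism (finding an element $z$ with $\sigma\cdot z=z-y$ and then applying Theorem~\ref{t2.2}), specialized to the groups $G_{pd}$ rather than to arbitrary $p$-groups.
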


We note that, when $G$ is cyclic and $\rho:G\to\textup{GL}(V_{\textup{reg}})$
is the regular representation, a method to find explicitly a transcendental
basis of $k(G)$ was proposed in \cite{Ha2}.

Here is a result generalizing Kuniyoshi\textendash Gasch\"utz Theorem
for the case of the regular representation (note that it is unnecessary
to assume that $G$ is a $p$-group).
\begin{thm}
\label{t1.3} \textup{\cite[Theorem 1.1]{KP}} Let $k$ be a field
with $\textup{char}(k)=p>0$, $G$ a finite group, and $\tilde{G}$
a group extension given by $1\to\mathbb{Z}/p\mathbb{Z}\to\tilde{G}\to G\to1$.
Then $k(\tilde{G})$ is rational over $k(G)$.
\end{thm}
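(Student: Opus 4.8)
The plan is to realize $k(\tilde G)$ as the invariant field $k(W)^{\tilde G}$ of the regular representation $W$ of $\tilde G$, and then to descend to $k(G)$ through the normal subgroup $N := \mathbb{Z}/p\mathbb{Z}$ by building a tower of rational extensions. Writing $N = \langle \sigma\rangle$ and choosing a lift $\hat g \in \tilde G$ of each $g \in G$, I would group the basis $\{y_h : h \in \tilde G\}$ of $W$ into the $|G|$ blocks $\{z_{g,j} := y_{\sigma^j \hat g} : 0 \le j \le p-1\}$ indexed by the cosets $N\hat g$. Left translation by $N$ then permutes each block cyclically, so $W|_N$ is a sum of $|G|$ copies of the regular representation of $C_p$, while $\tilde G$ permutes the blocks according to the regular action of $G = \tilde G/N$ and twists within a block through the conjugation character $\chi : \tilde G \to \mathbb{F}_p^{\times}$ determined by $a\sigma a^{-1} = \sigma^{\chi(a)}$.

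First I would linearize the $N$-action in the spirit of Kuniyoshi. Since $\mathrm{char}(k) = p$, the operator $\sigma - 1$ is nilpotent with $(\sigma-1)^p = \sigma^p - 1 = 0$, so on each block the forms $W_g^{(r)} := (\sigma-1)^r z_{g,0}$ (for $0 \le r \le p-1$) give a new basis on which $\sigma$ acts by the unipotent Jordan shift $W_g^{(r)} \mapsto W_g^{(r)} + W_g^{(r+1)}$, with the top form $W_g^{(p-1)}$ agreeing up to sign with the $N$-trace $s_g := \sum_j z_{g,j}$. A key computation is that these forms span a $\tilde G$-stable flag: using $a\sigma = \sigma^{\chi(a)}a$ one gets $a\, W_g^{(r)} = \chi(a)^r W_{\bar a g}^{(r)} + (\text{terms } W_{\bar a g}^{(r')} \text{ with } r' > r)$, so that the subspaces $F_m := \mathrm{span}\{W_g^{(r)} : r \ge m\}$ form a $\tilde G$-invariant filtration whose graded pieces are monomial representations, each isomorphic to the regular representation of $G$ twisted by $\chi^m$. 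In particular the bottom piece $F_{p-1} = \mathrm{span}\{s_g : g \in G\}$ carries the untwisted regular representation of $G$ (untwisted since $\chi^{p-1} = 1$), so that $k(s_g : g \in G)^{\tilde G} = k(s_g : g \in G)^{G} = k(G)$ by definition.

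With this structure in hand, I would descend along the filtration, adjoining the coordinates $W_g^{(m)}$ for $m = p-2, p-3, \ldots, 0$ in turn over the base $k(s_g : g \in G)$. At each stage the new coordinate transforms affinely: its $\tilde G$-image has linear part $\chi(a)^m W_{\bar a g}^{(m)}$, a monomial matrix over $k$, plus a correction lying in the previously adjoined subfield. The transitions in which the acting group is already faithful on the base are handled by the no-name lemma in its Speiser/Hilbert~90 form, namely that a semilinear action on a vector space over a field carrying a faithful group action admits a basis of invariants; this yields a rational extension of the invariant subfields with the expected total transcendence degree $(p-1)|G|$. The single remaining transition, adjoining $\{W_g^{(p-2)}\}$ over $k(s_g : g \in G)$ where $N$ still acts trivially on the base, is exactly a Kuniyoshi/Artin--Schreier step: $\sigma$ translates $W_g^{(p-2)}$ by $s_g$, and $k(s_g : g \in G)(W_g^{(p-2)})^{N}$ is rational, generated block-wise by the Artin--Schreier invariant $(W_g^{(p-2)})^p - s_g^{p-1} W_g^{(p-2)}$.

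The main obstacle I anticipate is the bookkeeping at the interface of these two mechanisms. I must choose the Kuniyoshi/Artin--Schreier generators uniformly across the $G$-permuted blocks so that the residual $G = \tilde G/N$ action on them remains $k(s_g)$-semilinear rather than genuinely nonlinear, since only then does the no-name lemma apply to complete the descent to $k(G)$. Concretely, the multiplicative twist by $\chi(a)^m$ and the additive cocycle governing $a\hat g = \sigma^{d(a,g)}\widehat{\bar a g}$ must be shown to pass through the nonlinear change of variables linearly; verifying this compatibility, and confirming faithfulness of the acting group at each level of the tower, is the technical heart of the argument.
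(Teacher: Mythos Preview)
The paper does not give its own proof of Theorem~\ref{t1.3}; the result is simply quoted from \cite{KP} as background, so there is nothing in the paper to compare your argument against.

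Your filtration strategy is essentially the standard one and is sound in outline, but one concrete step is misstated. You claim that $k(s_g:g\in G)(W_g^{(p-2)}:g\in G)^{N}$ is ``generated block-wise by the Artin--Schreier invariant $(W_g^{(p-2)})^p - s_g^{p-1} W_g^{(p-2)}$''. That is false: $N\simeq C_p$ acts \emph{diagonally} on all the $W_g^{(p-2)}$ at once (a single translation $\sigma\cdot W_g^{(p-2)}=W_g^{(p-2)}+s_g$), so the invariant subfield has index~$p$, whereas the $|G|$ block-wise Artin--Schreier elements together with the $s_g$ cut out a subfield of index $p^{|G|}$. The correct generators are the $s_g$, the $|G|-1$ linear invariants $s_{g_0}W_g^{(p-2)}-s_g W_{g_0}^{(p-2)}$ for some fixed $g_0$, and a \emph{single} Artin--Schreier element. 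This correction feeds directly into the obstacle you already flagged: choosing a basepoint $g_0$ destroys $G$-equivariance, so you must either symmetrize these generators or reorganize the tower (for example, first pass from $k(s_g)$ to $k(s_g,W_g^{(p-2)})$, where $\tilde G$ becomes faithful, handle that one step by hand, and then adjoin all remaining $W_g^{(m)}$ in one application of Theorem~\ref{t2.1}). Once this is done carefully the proof goes through; your diagnosis of where the real work lies is accurate.
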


The referee told us that Section 4 of Saltman's paper \cite{Sa} contains
results similar to the above Theorem \ref{t1.3}.

However, when we consider rationality problems of subgroups in $S_{14}$
with the field $k$ satisfying that $\textup{char}(k)=7$, new situations
may arise for which Theorem \ref{t1.2} and \ref{t1.3} will not work
any more. For example, here is a typical question.
\begin{question}
\label{q1.4} Let $k$ be a field with $\textup{char}(k)=7$, and
$k(t,x_{1},\cdots,x_{6})$ the rational function field of $7$ variables
over $k$ (i.e., $\textup{trdeg}_{k}k(t,x_{1},\cdots,x_{6})=7$).
Suppose that $G:=\left\langle \sigma,\tau\right\rangle $ is a group
acting on $k(t,x_{1},\cdots,x_{6})$ by the monomial $k$-automorphisms
defined by 
\begin{align*}
 & \sigma:t\mapsto t,x_{1}\mapsto x_{2}\mapsto\cdots\mapsto x_{6}\mapsto t^{2}/{\textstyle \prod_{i=1}^{6}}x_{i},\\
 & \tau:t\mapsto-t,x_{i}\mapsto x_{3i},
\end{align*}
where the indices of $x_{i}$ are taken modulo $7$. Is the fixed
subfield $k(t,x_{1},\cdots,x_{6})^{G}$ $k$-rational? Is it stably
$k$-rational?
\end{question}

The reader may find the definition of monomial $k$-automorphisms
in \cite[page 805]{HK1}.

This example prompts us to reexamine Kuniyoshi\textendash Gasch\"utz
Theorem and to improve the old techniques for this new situation.
As an illustration, the following theorem, which is a generalization
of \cite[Lemma 2.7]{WZ}, will solve Question \ref{q1.4} affirmatively
for the stable rationality.
\begin{thm}
\label{t1.5} Let $k$ be a field, $n$ a positive integer, $d$ an
integer relatively prime to $n$, $k(t,s,x_{0},x_{1},\cdots,x_{n-1})$
a field satisfying that $\textup{trdeg}_{k}k(t,s,x_{0},x_{1},\cdots,x_{n-1})=n+1$
and $t^{d}=\prod_{i=0}^{n-1}x_{i}$. Let $G$ be a subgroup of $S_{n}$
and $\chi:G\to k^{\times}$ a linear character with $\chi^{d}=1$
(the possibility that $\chi$ is the trivial character is allowed).
Suppose that $G$ acts on $k(t,s,x_{0},x_{1},\cdots,x_{n-1})$ by
$k$-automorphisms defined by $\sigma\cdot t:=\chi(\sigma)t$, $\sigma\cdot s:=s$,
$\sigma\cdot x_{i}:=x_{\sigma\cdot i}$ for any $\sigma\in G$ and
$0\leq i\leq n-1$. Then there exists some element $u\in k(t,s,x_{0},x_{1},\cdots,x_{n-1})$
such that $\sigma\cdot u=u$ for any $\sigma\in G$, and $k(t,s,x_{0},x_{1},\cdots,x_{n-1})^{G}=k(x_{0}/s,x_{1}/s,\cdots,x_{n-1}/s)^{G}(u)$.
\end{thm}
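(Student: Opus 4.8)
The plan is to trade the two scalar variables $t,s$ for a single $G$-invariant transcendental element, after which the fixed field is manifestly a simple transcendental extension of $k(y_0,\dots,y_{n-1})^G$, where I abbreviate $y_i:=x_i/s$. Since $x_i=sy_i$ and $s$ is fixed, $\sigma\cdot y_i=y_{\sigma\cdot i}$, so $G$ permutes the $y_i$ exactly as it does the $x_i$, and $k(t,s,x_0,\dots,x_{n-1})=k(t,s,y_0,\dots,y_{n-1})$. The relation becomes $t^d=s^nP$ with $P:=\prod_{i=0}^{n-1}y_i$, which is $G$-invariant. Counting transcendence degrees shows $s,y_0,\dots,y_{n-1}$ are algebraically independent; in particular $k(y_0,\dots,y_{n-1})$ is $k$-rational of transcendence degree $n$ and $k(y_0,\dots,y_{n-1})/k(y_0,\dots,y_{n-1})^G$ is Galois with group $G$.

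The key step exploits $\gcd(d,n)=1$: I choose integers $a,b$ with $ad+bn=1$ and set $z:=s^at^b$. Using $t^d=s^nP$ one checks the clean identities $s=z^dP^{-b}$ and $t=z^nP^a$, so $s,t\in k(z,y_0,\dots,y_{n-1})$ while $z\in k(t,s,y_0,\dots,y_{n-1})$. Hence $k(t,s,x_0,\dots,x_{n-1})=k(z,y_0,\dots,y_{n-1})$, and a dimension count shows this is $k$-rational in $z,y_0,\dots,y_{n-1}$. The action has now been put in a transparent form: $\sigma\cdot z=\chi(\sigma)^bz$, i.e.\ $z$ is scaled by the linear character $\psi:=\chi^b$, whereas the $y_i$ are merely permuted.

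Finally I make $z$ invariant. As $\psi$ takes values in $k^\times$, which $G$ fixes pointwise, $\sigma\mapsto\psi(\sigma)$ is a $1$-cocycle of $G$ valued in $k(y_0,\dots,y_{n-1})^\times$; by Hilbert's Theorem~90 applied to the Galois extension $k(y_0,\dots,y_{n-1})/k(y_0,\dots,y_{n-1})^G$ it is a coboundary, so there is $q\in k(y_0,\dots,y_{n-1})^\times$ with $\sigma\cdot q=\psi(\sigma)\,q$ for all $\sigma\in G$. Setting $u:=z/q$, we obtain $\sigma\cdot u=u$, and $u$ is transcendental over $k(y_0,\dots,y_{n-1})$ with $k(z,y_0,\dots,y_{n-1})=k(u,y_0,\dots,y_{n-1})$. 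The elementary fact that $k(y_0,\dots,y_{n-1})(u)^G=k(y_0,\dots,y_{n-1})^G(u)$ for a $G$-invariant transcendental $u$ then yields $k(t,s,x_0,\dots,x_{n-1})^G=k(x_0/s,\dots,x_{n-1}/s)^G(u)$, as desired.

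The main obstacle is the middle step: discovering the single generator $z=s^at^b$ through the B\'ezout relation $ad+bn=1$ and verifying that $z$, together with the $y_i$, rationally recovers both $s$ and $t$, which is precisely where the coprimality of $d$ and $n$ is essential. Once the $G$-action has been reduced to ``scale $z$ by a character, permute the $y_i$,'' producing the relative invariant $q$ via Hilbert~90 and carrying out the final descent are standard.
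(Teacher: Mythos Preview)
Your proof is correct and follows essentially the same route as the paper: both introduce $y_i=x_i/s$, use a B\'ezout relation on $(d,n)$ to replace the pair $(t,s)$ by a single generator $t^as^b$ (your $z$), observe that $t^d/s^n=\prod_i y_i$ makes this generator transcendental over $k(y_0,\dots,y_{n-1})$, and then descend the resulting one-variable character action. The only cosmetic difference is in that last step: the paper first passes to $H=\ker\chi$ and invokes Theorem~\ref{t2.2} on the quotient $G/H$, whereas you apply Hilbert~90 directly to the Galois extension $k(y_0,\dots,y_{n-1})/k(y_0,\dots,y_{n-1})^G$ to produce the relative invariant $q$.
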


Note that the field $k(t,s,x_{0},x_{1},\cdots,x_{n-1})$ with $t^{d}=\prod_{i=0}^{n-1}x_{i}$
is nothing but the rational function field $k(t,s,x_{1},\cdots,x_{n-1})$
in the variables $t,s,x_{1},\cdots,x_{n-1}$. With Theorem \ref{t1.5}
in hand, we may solve Question \ref{q1.4} by introducing a new element
$x_{0}$ such that $t^{2}=\prod_{i=0}^{6}x_{i}$. Applying Theorem
\ref{t1.5}, we find that the rationality problem of $k(t,x_{1},\cdots,x_{6})^{G}(s)$
is reduced to that of $k(x_{0}/s,x_{1}/s,\cdots,x_{6}/s)^{G}(u)$.
Then we may apply Theorem \ref{t1.1}.

In Theorem \ref{t1.5}, we may as well consider the case $k(t,s,x_{0},x_{1},\cdots,x_{p-1})$
is a field satisfying that, $p$ is a prime number, $\textup{trdeg}_{k}k(t,s,x_{0},x_{1},\cdots,x_{p-1})=p+1$,
and $t^{d'}=\prod_{i=0}^{p-1}x_{i}$ with $d'$ divisible by $p$.
See Question \ref{q3.1}.

Before stating the next result, we digress to define some $k$-automorphisms.
\begin{defn}
\label{d1.6} Let $k$ be a field, $p$ an odd prime number, $a$
an integer satisfying that $(\mathbb{Z}/p\mathbb{Z})^{\times}=\langle\bar{a}\rangle$,
$S_{p}$ the symmetric group of degree $p$, and $k(x_{i},y_{i}:0\leq i\leq p-1)$
the rational function field of $2p$ variables over $k$. We define
six elements $\sigma_{1},\sigma_{2},\lambda_{1},\lambda_{2},\rho_{1},\rho_{2}\in S_{p}$
such that they act on $k(x_{i},y_{i}:0\leq i\leq p-1)$ by
\begin{align*}
 & \sigma_{1}:x_{i}\mapsto x_{i+1},y_{i}\mapsto y_{i}, &  & \sigma_{2}:x_{i}\mapsto x_{i},y_{i}\mapsto y_{i+1}, &  & \lambda_{1}:x_{i}\mapsto y_{-i},y_{i}\mapsto x_{i},\\
 & \lambda_{2}:x_{i}\leftrightarrow y_{i}, &  & \rho_{1}:x_{i}\mapsto x_{ai},y_{i}\mapsto y_{i}, &  & \rho_{2}:x_{i}\mapsto x_{i},y_{i}\mapsto y_{ai},
\end{align*}
where $0\leq i\leq p-1$, and the indices of $x_{i}$ and $y_{i}$
are taken modulo $p$.
\end{defn}

Note that $\rho_{1}$ and $\rho_{2}$ depend on $a$. In the proof
of Section \ref{sec5.3}, we will use the same $\rho_{1}$ or $\rho_{2}$,
but the values of $a$ will be different.

Let $\sigma_{1},\sigma_{2},\lambda_{1},\lambda_{2},\rho_{1},\rho_{2}$
be the $k$-automorphisms introduced in Definition \ref{d1.6}. Here
is another theorem to be used in Section \ref{sec5}.
\begin{thm}
\label{t1.7} Let $p$ be an odd prime number, $k$ a field with $\textup{char}(k)=p>0$,
and $k(x_{i},y_{i}:0\leq i\leq p-1)$ the rational function field
of $2p$ variables over $k$. Then $k(x_{i},y_{i}:0\leq i\leq p-1)^{G}$
is $k$-rational when $G$ is $\langle\sigma_{1},\sigma_{2},\lambda_{1},\rho_{1}\rho_{2}\rangle$,
$\langle\sigma_{1},\sigma_{2},\lambda_{1},\rho_{1}^{-1}\rho_{2}\rangle$,
$\langle\sigma_{1},\sigma_{2},\lambda_{2},\rho_{1}\rho_{2}\rangle$,
or $\langle\sigma_{1},\sigma_{2},\lambda_{2},\rho_{1}^{-1}\rho_{2}\rangle$.
\end{thm}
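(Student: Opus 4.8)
The plan is to exploit the characteristic-$p$ structure common to all four cases: the subgroup $N=\langle\sigma_{1},\sigma_{2}\rangle\cong C_{p}\times C_{p}$ is a \emph{normal} Sylow $p$-subgroup of $G$, and the quotient $H=G/N$ — generated by the image of $\lambda_{1}$ or $\lambda_{2}$ together with that of $\rho_{1}\rho_{2}$ or $\rho_{1}^{-1}\rho_{2}$ — has order prime to $p$ (here $p$ is odd, $\lambda_{2},\lambda_{1}$ have order $2,4$, and the $\rho$'s have order dividing $p-1$). First I would linearise everything by passing to the moment coordinates $s_{j}:=\sum_{i=0}^{p-1}i^{j}x_{i}$ and $s'_{j}:=\sum_{i=0}^{p-1}i^{j}y_{i}$ for $0\le j\le p-1$. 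Since the exponents $0,1,\dots,p-1$ are distinct in $\mathbb{F}_{p}$, the transition between the $x_{i}$ and the $s_{j}$ (and between the $y_{i}$ and the $s'_{j}$) is invertible, so $k(x_{i},y_{i})=k(s_{j},s'_{j})$, and a direct computation gives the clean rules
\[
\sigma_{1}\colon s_{j}\mapsto\sum_{l=0}^{j}\binom{j}{l}(-1)^{j-l}s_{l},\qquad
\rho_{1}\colon s_{j}\mapsto a^{-j}s_{j},\qquad
\lambda_{2}\colon s_{j}\mapsto s'_{j},\qquad
\lambda_{1}\colon s_{j}\mapsto(-1)^{j}s'_{j},\ s'_{j}\mapsto s_{j},
\]
with $\sigma_{2},\rho_{2}$ acting analogously on the $s'_{j}$ and trivially on the $s_{j}$. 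Thus $\sigma_{1},\sigma_{2}$ act unipotently, $\rho_{1},\rho_{2}$ diagonally, and $\lambda_{1},\lambda_{2}$ permute the two blocks (with a sign for $\lambda_{1}$).

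The core step is to take $N$-invariants while retaining control of the $H$-action. Writing $r_{j}:=s_{j}/s_{0}$ one has $\sigma_{1}\colon r_{1}\mapsto r_{1}-1$, so $\theta:=r_{1}^{p}-r_{1}$ is $\sigma_{1}$-invariant; and the centred moments $w_{j}:=\sum_{l=0}^{j}\binom{j}{l}(-r_{1})^{j-l}r_{l}$ for $2\le j\le p-1$ are likewise $\sigma_{1}$-invariant. The passage from $(s_{0},r_{1},\dots,r_{p-1})$ to $(s_{0},r_{1},w_{2},\dots,w_{p-1})$ is triangular, and over the fixed subfield $k(s_{0},w_{2},\dots,w_{p-1})$ the element $\sigma_{1}$ moves only $r_{1}$, by a translation; hence $k(x_{i})^{\langle\sigma_{1}\rangle}=k(s_{0},\theta,w_{2},\dots,w_{p-1})$. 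The crucial point is that $\rho_{1}$ acts on these generators \emph{diagonally}, by $s_{0}\mapsto s_{0}$, $\theta\mapsto a^{-1}\theta$, $w_{j}\mapsto a^{-j}w_{j}$ (using $a^{p}=a$). Doing the same for the $y$-block shows that
\[
k(x_{i},y_{i})^{N}=k\bigl(s_{0},\theta,w_{2},\dots,w_{p-1};\,s'_{0},\theta',w'_{2},\dots,w'_{p-1}\bigr)
\]
is rational of transcendence degree $2p$, and $H$ acts on this generating set by \emph{monomial} automorphisms: $\rho_{1}\rho_{2}$ scales the two blocks equally while $\rho_{1}^{-1}\rho_{2}$ scales them with opposite weights, $\lambda_{2}$ swaps the two blocks, and $\lambda_{1}$ swaps them with the degree-$j$ sign $(-1)^{j}$ (so $s_{0}\leftrightarrow s'_{0}$, $\theta\mapsto-\theta'$, $\theta'\mapsto\theta$). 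In particular cases $\rho_{1}\rho_{2}$ give an abelian $H$, while $\rho_{1}^{-1}\rho_{2}$ give a non-abelian $H$ in which $\lambda$ inverts the $\rho$-element.

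It then remains to prove rationality of $k(\cdots)^{H}$ for these monomial actions of the prime-to-$p$ group $H$, which I would do in two moves. Since $\langle\rho\rangle\trianglelefteq H$, I first pass to $\rho$-invariants: a finite diagonalisable group acting by scalars from $\mathbb{F}_{p}^{\times}\subseteq k$ has invariants spanned by the monomials whose total weight is $0$, these form a Laurent polynomial ring (a lattice/Smith-normal-form computation valid over any field), so the $\rho$-invariant subfield is again rational, and the cyclic quotient $H/\langle\rho\rangle$ (generated by $\bar\lambda_{2}$ of order $2$, or $\bar\lambda_{1}$ of order $4$) still acts monomially on it. Second, because $\operatorname{char}k=p$ is odd, $2$ is invertible, so a simultaneous swap $u_{i}\leftrightarrow v_{i}$ is handled by the explicit rational generators $u_{i}+v_{i}$, the ratios $(u_{i}-v_{i})/(u_{1}-v_{1})$, and $(u_{1}-v_{1})^{2}$; for the order-$4$ element $\lambda_{1}$ one first quotients by the diagonal sign involution $\lambda_{1}^{2}$ (again a diagonalisable action, hence rational) and then applies the swap argument to the residual involution.

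The main obstacle I expect is the bookkeeping in the two non-abelian cases $\langle\sigma_{1},\sigma_{2},\lambda_{1},\rho_{1}^{-1}\rho_{2}\rangle$ and $\langle\sigma_{1},\sigma_{2},\lambda_{2},\rho_{1}^{-1}\rho_{2}\rangle$, where conjugation by $\lambda$ inverts the $\rho$-element, together with keeping the sign twist $(-1)^{j}$ of $\lambda_{1}$ compatible with the diagonal $\rho$-weights. One must choose the order in which the torus part and the finite swap part are quotiented so that every intermediate action stays monomial and the explicit (anti)symmetric invariants remain available, and check throughout that all constructions are defined over the arbitrary base field $k$ — which is guaranteed here precisely because every scalar $a^{\pm j}$ lies in $\mathbb{F}_{p}\subseteq k$, so no auxiliary roots of unity are needed.
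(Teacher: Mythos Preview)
Your moment coordinates $s_j,s'_j$ and the Artin--Schreier computation of the $N$-invariants coincide with the paper's $u_i,v_i$ and $u_1',v_1'$ (indeed $\prod_i(u_1/u_0+i)=(u_1/u_0)^p-(u_1/u_0)$ is exactly your $\theta$). The paper, however, never carries all $2p$ generators through the $H$-step. Immediately after introducing the $u_i,v_i$ it invokes Theorem~\ref{t2.1}: since $G$ is already faithful on $k(u_0,v_0,u_1,v_1)$ and acts on the remaining $u_i,v_i$ by linear substitutions over that base, one may discard them. Taking $N$-invariants of the four-variable field gives $k(u_0,v_0,u_1',v_1')$; a second application of Theorem~\ref{t2.1} (now with base $k(u_1',v_1')$, on which $G/N$ is faithful) drops $u_0,v_0$; and the residual two-variable problem $k(u_1',v_1')^{G/N}$ is dispatched by Theorem~\ref{t2.2} and L\"uroth. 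The whole $H$-step is thus a one-variable computation, uniform across the four cases.

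Your alternative route---handling the full $2p$-variable monomial $H$-action by ``diagonal $\rho$-invariants, then swap''---has a genuine gap in the second move. After passing to $\langle\rho\rangle$-invariants the residual $\bar\lambda$-action is monomial, but it is \emph{not} a simultaneous swap of a transcendence basis. For $\rho=\rho_1\rho_2$ a basis of the $\rho$-invariant lattice must contain, say, $\theta^d$ and $\theta/\theta'$, on which $\lambda_2$ acts by $\theta/\theta'\mapsto(\theta/\theta')^{-1}$ and $\theta^d\mapsto\theta^d\cdot(\theta/\theta')^{-d}$: this is an honest $C_2$-lattice action, not a swap, so your explicit invariants $u_i+v_i,\ (u_i-v_i)/(u_1-v_1),\ (u_1-v_1)^2$ do not apply. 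In the non-abelian cases $\rho=\rho_1^{-1}\rho_2$ the two blocks carry opposite $\rho$-weights, so no $\rho$-invariant basis can be chosen in $\lambda$-swapped pairs at all; and reversing the order (taking $\lambda$-invariants first) is blocked because $\langle\lambda\rangle$ is not normal in $H$ there. The argument can be rescued by classifying the resulting $C_2$- (or $C_4$-) lattice and treating each indecomposable summand separately, but that is substantially more work than you indicate---and it is exactly what the paper's early use of Theorem~\ref{t2.1} renders unnecessary.
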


As an application of Theorem \ref{t1.5} and \ref{t1.7}, we will
prove the following theorem.
\begin{thm}
\label{t1.8} Let $k$ be a field with $\textup{char}(k)=7$ and $G$
a solvable transitive subgroup of $S_{14}$ acting naturally on $k(x_{1},\cdots,x_{14})$,
the rational function field of $14$ variables over $k$. Then $k(x_{1},\cdots,x_{14})^{G}$
is k-rational.
\end{thm}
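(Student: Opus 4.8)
The plan is to reduce the $k$-rationality of $k(x_1,\dots,x_{14})^G$, for each solvable transitive $G\le S_{14}$, to the explicit constructions of Theorems \ref{t1.5} and \ref{t1.7} together with the known rationality statements of Theorem \ref{t1.1}. First I would fix the group theory. Since $14=2\cdot 7$ is not a prime power there is no primitive solvable group of degree $14$, so every such $G$ is imprimitive and carries a block system with either seven blocks of size $2$ or two blocks of size $7$. The $7$-part of $14!$ is $7^2$, and no $7$-group acts transitively on $14$ points; hence a Sylow $7$-subgroup $P$ of $G$ is either $C_7$ (a single $7$-cycle with two orbits of length $7$) or $C_7\times C_7$ (two independent $7$-cycles, one per block of size $7$). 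In every case $P$ is normal in $G$: it is the (characteristic) Sylow $7$-subgroup of a normal subgroup of $G$ lying in $\textup{AGL}_1(\mathbb F_7)$ (blocks of size $2$) or in $\textup{AGL}_1(\mathbb F_7)\times\textup{AGL}_1(\mathbb F_7)$ (blocks of size $7$), whose $7$-part is normal there. Since the building blocks $\textup{AGL}_1(\mathbb F_7)=C_7\rtimes C_6$ and $C_2$ involve only the primes $2,3,7$, the quotient $G/P$ is a solvable $\{2,3\}$-group, of order prime to $\textup{char}(k)=7$. Thus $k(x_1,\dots,x_{14})^G=\bigl(k(x_1,\dots,x_{14})^P\bigr)^{G/P}$, where $k(x_1,\dots,x_{14})^P$ is $k$-rational by Kuniyoshi--Gasch\"utz (Theorem \ref{t1.2}); the whole problem is to produce a transcendence basis of $k(x_1,\dots,x_{14})^P$ on which the tame quotient $G/P$ acts in a visibly rational form.

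For the groups with $P\cong C_7\times C_7$, the two blocks of size $7$ furnish coordinates $x_0,\dots,x_6$ and $y_0,\dots,y_6$ exactly as in Definition \ref{d1.6}: the two independent $7$-cycles are $\sigma_1,\sigma_2$, the block interchange forced by transitivity is $\lambda_1$ or $\lambda_2$, and the multipliers from $\textup{AGL}_1(\mathbb F_7)$ are powers of $\rho_1,\rho_2$. When the multiplier part of $G$ is cyclic it must be the diagonal $\langle\rho_1\rho_2\rangle$ or the anti-diagonal $\langle\rho_1^{-1}\rho_2\rangle$ (the only swap-invariant cyclic subgroups of $C_6\times C_6$), so $G$ is one of the four groups named in Theorem \ref{t1.7} for a suitable value of the parameter $a$, and I would simply invoke that theorem. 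For the finitely many larger groups, where the multiplier part is not cyclic (for instance $\textup{AGL}_1(\mathbb F_7)\wr C_2$, with multiplier quotient $C_6\times C_6$), I would first strip off the extra, prime-to-$7$ multiplier action by a classical descent: the normal subgroup of $G$ obtained by cutting the multiplier down to a single generator has invariant field covered by Theorem \ref{t1.7}, and the remaining tame quotient then acts on that rational field monomially, whose invariants are rational by the standard theory of tame monomial actions.

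For the groups with $P\cong C_7$ the modular obstruction is a single $7$-cycle, which is precisely the situation prepared by Theorem \ref{t1.5} (compare Question \ref{q1.4}). If $G$ admits the block system with seven blocks of size $2$, the $7$-cycle permutes the blocks cyclically, and I would first eliminate the within-block $2$-part (the kernel of the action on blocks, a subgroup of $C_2^7$) by taking symmetric functions inside each block, passing from $14$ to $7$ variables. On the resulting $7$ variables carrying the $C_7$ and its multiplier, I would adjoin the auxiliary variables $t,s$ with a relation $t^d=\prod_i x_i$ and the character $\chi$ recording the action of the complement on $t$, exactly as in the remark after Theorem \ref{t1.5}; that theorem then rewrites the fixed field as $k(x_0/s,\dots)^G(u)$, where the group now acts by honest permutations of the reduced variables of effective degree at most $7$, and Theorem \ref{t1.1} supplies $k$-rationality. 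The remaining $C_7$-groups are the two-block, diagonal ones whose block swap is twisted (e.g.\ $\langle\sigma_1\sigma_2,\lambda_1\rangle$) and which therefore possess no size-$2$ block system; for these I would instead use an explicit Kuniyoshi--Gasch\"utz transcendence basis of $k(x_1,\dots,x_{14})^{C_7}$ (available because $P$ is cyclic, cf.\ the method cited in the excerpt) and descend directly under the small tame quotient $G/P$.

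The main obstacle, as anticipated, is the $C_7\times C_7$ stratum: because the $7$-action is genuinely modular in $\textup{char}(k)=7$, the $P$-invariant field cannot be linearized and the descent to $G/P$ cannot be carried out by the usual no-name arguments -- this is exactly the gap that Theorem \ref{t1.7} is built to fill. The remaining work is bookkeeping rather than new ideas: verifying from the classification of transitive groups of degree $14$ that every solvable transitive $G$ lands in the $C_7$ or $C_7\times C_7$ families above; matching each $C_7\times C_7$ group to one of the four normal forms of Theorem \ref{t1.7} with the correct multiplier $a$, together with the preliminary tame descent for the larger groups; and choosing the correct relation $t^d=\prod_i x_i$ and character $\chi$ so that Theorem \ref{t1.5} applies verbatim in the $C_7$ family. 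Once these matchings are checked group by group, the $k$-rationality of $k(x_1,\dots,x_{14})^G$ follows in every case.
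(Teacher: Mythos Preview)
Your organizing claim---that the Sylow $7$-subgroup $P$ is normal in every solvable transitive $G\le S_{14}$---is false, and the argument collapses for all of the groups in Classes 4, 5, 6 of the paper (and for the wreath products $C_2\wr_{X_7}G_{pd}$ in Class~1). For instance in $G(6)=N_8\rtimes\langle\sigma_1\sigma_2\rangle\simeq C_2^{\,3}\rtimes C_7$ the $7$-cycle acts on $C_2^{\,3}$ as the irreducible $3$-dimensional $\mathbb F_2[C_7]$-module, which has no nonzero fixed vector; hence $N_G(\langle\sigma_1\sigma_2\rangle)=\langle\sigma_1\sigma_2\rangle$ and there are eight Sylow $7$-subgroups. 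Your justification confuses the \emph{image} of $G\to S_7$ (which does lie in $\textup{AGL}_1(\mathbb F_7)$) with a subgroup of $G$: the preimage of the normal $C_7$ is indeed normal in $G$, but its own Sylow $7$-subgroup need not be characteristic in it.

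Because of this, your plan for the groups with seven blocks of size~$2$ is inverted relative to what actually works. The paper first applies Theorem~\ref{t2.1} with $y_i=x_{2i+1}-x_{2i+2}$ to reduce to seven variables on which $G$ is still faithful; the normal $2$-group $N\in\{N_8,N_{16},N_{64}\}$ then acts by sign changes, and its invariant field is generated by explicit monomials $t,z_0,\dots,z_6$ satisfying $t^d=\prod_i z_i$ with $\gcd(d,7)=1$. It is this relation---\emph{produced} by the $2$-group invariants, not adjoined by hand---together with the character through which $G/N$ moves $t$, that places one exactly in the hypotheses of Theorem~\ref{t1.5}. Your phrase ``eliminate the within-block $2$-part by taking symmetric functions, passing to $7$ variables'' misses this mechanism; when the kernel happens to be trivial (Class~2) one never needs Theorem~\ref{t1.5} at all, only Theorem~\ref{t2.1} followed by Theorem~\ref{t1.1}. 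Finally, for the larger Class~3 groups your proposed two-step descent is shaky on both ends: the ``cyclic-multiplier'' subgroup you want to peel off first is not in general normal (for example $\lambda_5$ does not normalize $\langle\sigma_1,\sigma_2,\lambda_1,\lambda_5\lambda_6\rangle$ inside $G(36)$), and an appeal to ``the standard theory of tame monomial actions'' does not by itself guarantee rationality. The paper instead reruns the explicit computation behind Theorem~\ref{t1.7} with the extra generators present.
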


Another version of Theorem \ref{t1.8} may be found in Theorem \ref{t4.1},
which employs a description of the transitive subgroups of $S_{14}$.
In Theorem \ref{t1.8}, if $G$ is a non-solvable transitive subgroup
of $S_{14}$, the rationality of $k(x_{1},\cdots,x_{14})^{G}$ is
known only for a few cases (see Theorem \ref{t4.1} for details).

We remark that Theorem \ref{t1.5} and \ref{t1.7} may be applied
also to prove the rationality problem of $k(x_{1},\cdots,x_{10})^{G}$
when $\textup{char}(k)=5$ and $G$ is a solvable transitive subgroup
of $S_{10}$. It may be true as well for many cases of $k(x_{1},\cdots,x_{2p})^{G}$
when $\textup{char}(k)=p$ is an odd prime number.

The ``non-modular'' situation of Theorem \ref{t1.8} (i.e., for
the case $\textup{char}(k)\neq7$) is an ongoing research program.
Hopefully it will be finished very soon.

This article is organized as follows. Section \ref{sec2} contains
some known results which will be applied in Section \ref{sec3} and
\ref{sec5}. The proofs of Theorem \ref{t1.5} and \ref{t1.7} will
be given in Section \ref{sec3}. A list of transitive subgroups of
$S_{14}$ (up to conjugation within $S_{14}$) is provided in Section
\ref{sec4}. Theorem \ref{t4.1} is a sharp form of Theorem \ref{t1.8}
and the proof of Theorem \ref{t4.1} may be found in Section \ref{sec5}.

\textit{Standing terminology.} Throughout this paper, $G$ is a finite
group and $k$ is a field. Recall that $k(G)$ is defined at the beginning
of this section. We will denote by $S_{n}$, $A_{n}$, $C_{n}$, and
$D_{n}$ the symmetric group of degree $n$, the alternating group
of degree $n$, the cyclic group of order $n$, and the dihedral group
of order $2n$ respectively. When we say that $k(x_{1},\cdots,x_{n})$
is a rational function field over a field $k$, we mean that $k(x_{1},\cdots,x_{n})$
is purely transcendental over $k$, and $\{x_{1},\cdots,x_{n}\}$
is a transcendental basis, equivalently, $\textup{trdeg}_{k}k(x_{1},\cdots,x_{n})=n$.
If $\sigma:k(x_{1},\cdots,x_{n})\to k(x_{1},\cdots,x_{n})$ is an
automorphism and $u\in k(x_{1},\cdots,x_{n})$, then $\sigma\cdot u$
denotes $\sigma(u)$, the image of $u$ under $\sigma$. By a linear
character $\chi:G\to k^{\times}$ we mean a group homomorphism from
$G$ to $k^{\times}$. Whenever we write $\chi:G\to k^{\times}$,
it is assumed that, for any $\sigma\in G$, $\chi(\sigma)\in k$ automatically.

Although we work on the case $\textup{char}(k)=p>0$ sometimes, we
also work on general cases. We will always state the assumptions of
the field $k$ explicitly. The readers should be aware not to confuse
the groups $G(21)$ and $G_{21}$: $G(21)$ is the $21$st transitive
subgroup of $S_{14}$ in Section \ref{sec4}, while $G_{21}$ is the
group $G_{pd}$ with $(p,d)=(7,3)$ and is defined in Definition \ref{d2.3}.

\textbf{Acknowledgments.} We thank the referee whose sharp and insightful
comments are very helpful to the presentation of this article.

\section{\label{sec2}Preliminaries}

In this section, we recall several known results. These results will
be used in the proofs of Section \ref{sec3} and \ref{sec5}.
\begin{thm}
\label{t2.1} \textup{\cite[Theorem 1]{HK2}} Let $G$ be a finite
group acting on $L(x_{1},\cdots,x_{n})$, the rational function field
of $n$ variables over a field $L$. Suppose that
\begin{itemize}
\item for any $\sigma\in G$, $\sigma(L)\subseteq L$; 
\item the restriction of the action of $G$ to $L$ is faithful; 
\item for any $\sigma\in G$, 
\[
\begin{pmatrix}\sigma(x_{1})\\
\vdots\\
\sigma(x_{n})
\end{pmatrix}=A(\sigma)\begin{pmatrix}x_{1}\\
\vdots\\
x_{n}
\end{pmatrix}+B(\sigma),
\]
where $A(\sigma)\in\textup{GL}_{n}(L)$ and $B(\sigma)$ is an $n\times1$
matrix over $L$. 
\end{itemize}
Then there exist $z_{1},\cdots,z_{n}\in L(x_{1},\cdots,x_{n})$ such
that $L(x_{1},\cdots,x_{n})=L(z_{1},\cdots,z_{n})$ with $\sigma(z_{i})=z_{i}$
for any $\sigma\in G$ and $1\leq i\leq n$.
\end{thm}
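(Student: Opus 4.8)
The plan is to read the hypotheses as saying that $G$ acts by semilinear affine transformations on the finite-dimensional $L$-subspace $W := L\cdot 1 \oplus \bigoplus_{i=1}^{n} L x_i \subseteq L(x_1,\dots,x_n)$, and then to \emph{linearize} (untwist) this action by a matrix form of Hilbert's Theorem 90. First I would invoke Artin's theorem: since $G$ acts faithfully on $L$, the extension $L/L^{G}$ is finite Galois with Galois group $G$; write $K := L^{G}$. This is exactly what makes Galois-cohomological machinery available, and it is the only place the faithfulness hypothesis is used.

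Next I would record the cocycle identity forced by the affine action. Writing $\tilde{\mathbf x} = (x_1,\dots,x_n,1)^{T}$ and $M(\sigma) = \begin{pmatrix} A(\sigma) & B(\sigma) \\ 0 & 1 \end{pmatrix} \in \textup{GL}_{n+1}(L)$, a direct computation of $(\sigma\tau)(x_i) = \sigma(\tau(x_i))$, using that $\sigma$ acts on the $L$-entries of $A(\tau),B(\tau)$ as well as on the variables, gives $M(\sigma\tau) = {}^{\sigma}\!M(\tau)\,M(\sigma)$, where ${}^{\sigma}\!M$ denotes the entrywise image of $M$ under $\sigma$. Thus $\sigma \mapsto M(\sigma)$ is a $1$-cocycle for the semilinear $G$-action on $\textup{GL}_{n+1}(L)$.

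The heart of the argument is then Speiser's theorem (the nonabelian Hilbert 90, i.e. the triviality of $H^{1}(G,\textup{GL}_{n+1}(L))$ for the Galois extension $L/K$): a cocycle of this semilinear form is a coboundary, so there exists $P \in \textup{GL}_{n+1}(L)$ with $M(\sigma) = ({}^{\sigma}\!P)^{-1} P$ for all $\sigma$ (one checks directly that this coboundary form satisfies $M(\sigma\tau) = {}^{\sigma}\!M(\tau)M(\sigma)$, matching the convention above). Setting $\tilde{\mathbf z} := P\,\tilde{\mathbf x}$ and applying $\sigma$ gives $\sigma(\tilde{\mathbf z}) = {}^{\sigma}\!P\,\sigma(\tilde{\mathbf x}) = {}^{\sigma}\!P\,M(\sigma)\,\tilde{\mathbf x} = P\,\tilde{\mathbf x} = \tilde{\mathbf z}$, so all $n+1$ entries $\tilde z_1,\dots,\tilde z_{n+1}$ are $G$-invariant affine forms in the $x_i$. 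To descend to exactly $n$ generators, note that $1 \in W$ yields a relation $1 = \sum_j c_j \tilde z_j$; comparing with $\sigma(1)=1$ and using $L$-independence of the $\tilde z_j$ forces $c_j \in K$, with some $c_j \neq 0$. Reindexing so that $c_{n+1}\neq 0$ expresses $\tilde z_{n+1}$ through $1$ and $\tilde z_1,\dots,\tilde z_n$, so $\{1,\tilde z_1,\dots,\tilde z_n\}$ is an $L$-basis of $W$; hence each $x_i$ is an $L$-affine combination of $z_i := \tilde z_i$ $(1\le i\le n)$, giving $L(x_1,\dots,x_n) = L(z_1,\dots,z_n)$ with $\sigma(z_i)=z_i$.

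I expect the main obstacle to be the correct invocation and bookkeeping around Speiser's theorem: getting the variance conventions exactly right so that the computed cocycle relation matches the form in which Hilbert 90 is applied, and then carefully extracting precisely $n$ algebraically independent invariant generators from the $n+1$ homogeneous coordinates, i.e. handling the affine/constant direction. If one prefers to avoid the nonabelian cohomology as a black box, the same conclusion can be reached in two reductions: first untwist the linear part $\{A(\sigma)\}$ by the $\textup{GL}_n$ form of Hilbert 90, and then kill the translation part $\{B(\sigma)\}$ by the additive Hilbert 90 (the vanishing of $H^{1}(G,L^{n})$, equivalently the normal basis theorem); here the only subtlety is that the order of these two steps must be managed with care, since the module structure on the translations is twisted by $A$.
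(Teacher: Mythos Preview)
Your argument is correct: encoding the affine action as a $1$-cocycle $\sigma\mapsto M(\sigma)$ with values in $\textup{GL}_{n+1}(L)$, trivializing it via Speiser's theorem (nonabelian Hilbert~90) for the Galois extension $L/L^{G}$, and then extracting $n$ invariant generators from the $(n{+}1)$-dimensional descent is exactly the right strategy, and your bookkeeping on the cocycle convention $M(\sigma\tau)={}^{\sigma}\!M(\tau)\,M(\sigma)$ and the coboundary form $M(\sigma)=({}^{\sigma}\!P)^{-1}P$ is consistent.

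There is nothing to compare against in this paper, however: Theorem~\ref{t2.1} is quoted without proof from \cite[Theorem~1]{HK} as a preliminary tool, and the present paper supplies no argument of its own. Your proof is essentially the classical one underlying that reference (Galois descent for vector spaces, phrased cohomologically); the alternative two-step route you sketch at the end---untwist $A(\sigma)$ by $H^{1}(G,\textup{GL}_{n}(L))=1$, then kill $B(\sigma)$ by additive Hilbert~90---is the same proof unpacked, and is closer to how some expositions present it.
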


\begin{thm}
\label{t2.2} \textup{\cite[Theorem 3.1]{AHK}} Let $G$ be a finite
group acting on $L(x)$, the rational function field of one variable
over a field $L$. Suppose that for any $\sigma\in G$, $\sigma(L)\subseteq L$
and $\sigma(x)=a_{\sigma}x+b_{\sigma}$, where $a_{\sigma}\in L^{\times}$
and $b_{\sigma}\in L$. Then $L(x)^{G}=L^{G}(f)$ for some polynomial
$f\in L[x]^{G}$.
\end{thm}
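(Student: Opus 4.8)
The plan is to peel off the subgroup of $G$ that acts trivially on $L$ and dispose of it by an explicit norm, and then to treat the quotient (which acts faithfully on $L$) by Galois cohomology. First I would reduce to the case where $G$ acts faithfully on $L(x)$, replacing $G$ by its image in $\textup{Aut}(L(x))$; since the kernel fixes $x$ and hence lies in $\{\sigma : \sigma|_{L} = \textup{id}\}$, both the hypotheses and the fields $L^{G}$ and $L(x)^{G}$ are unchanged. Set $N := \{\sigma \in G : \sigma|_{L} = \textup{id}\}$, a normal subgroup of $G$. Then $N$ fixes $L$ pointwise and acts faithfully on $x$ through the affine maps $\nu(x) = a_{\nu}x + b_{\nu}$, so the orbit of $x$ under $N$ has exactly $|N|$ elements. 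I would put $f_{0} := \prod_{\nu \in N}\nu(x) \in L[x]$, a polynomial of degree $|N|$ with invertible leading coefficient; reindexing the product shows $\mu(f_{0}) = f_{0}$ for $\mu \in N$, so $f_{0}$ is $N$-invariant. Because $L \subseteq L(x)^{N}$, Artin's theorem gives $[L(x):L(x)^{N}] = |N|$, while $[L(x):L(f_{0})] = \deg f_{0} = |N|$ and $L(f_{0}) \subseteq L(x)^{N}$; hence $L(x)^{N} = L(f_{0})$.

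Next I would analyze the induced action of $\bar{G} := G/N$ on $L(x)^{N} = L(f_{0})$. For $\tau \in G$, normality of $N$ means $\tau$ preserves $L(x)^{N} = L(f_{0})$, so $\tau(f_{0}) \in L(f_{0})$; as $f_{0}$ has invertible leading coefficient, $L[x]$ is a free $L[f_{0}]$-module of rank $|N|$ and therefore $L[x] \cap L(f_{0}) = L[f_{0}]$. Thus $\tau(f_{0}) \in L[f_{0}]$ has $x$-degree $|N| = \deg f_{0}$, forcing $\tau(f_{0}) = \alpha_{\tau}f_{0} + \beta_{\tau}$ with $\alpha_{\tau} \in L^{\times}$ and $\beta_{\tau} \in L$ (depending only on the class of $\tau$ in $\bar{G}$). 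So $\bar{G}$ acts on $L(f_{0})$ again by an affine action on the single variable $f_{0}$, but now faithfully on $L$, whence $L/L^{G}$ is Galois with group $\bar{G}$. Expanding $(\sigma\tau)(f_{0}) = \sigma(\tau(f_{0}))$ shows that $\tau \mapsto \alpha_{\tau}$ is a $1$-cocycle in $Z^{1}(\bar{G}, L^{\times})$; by Hilbert's Theorem $90$ it is a coboundary, $\alpha_{\tau} = \tau(c)/c$ for some $c \in L^{\times}$. Writing $g := c^{-1}f_{0}$ one computes $\tau(g) = g + \mu_{\tau}$ with $\mu_{\tau} := \tau(c^{-1})\beta_{\tau}$, and the same expansion gives the additive cocycle identity $\mu_{\sigma\tau} = \mu_{\sigma} + \sigma(\mu_{\tau})$. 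Since $H^{1}(\bar{G}, L^{+}) = 0$ (additive Hilbert $90$, valid in every characteristic), there is $d \in L$ with $\mu_{\tau} = d - \tau(d)$, and then $\tilde{f} := g + d = c^{-1}f_{0} + d$ is $\bar{G}$-invariant. Because $c^{-1}, d \in L$ are fixed by $N$ and $f_{0}$ is $N$-invariant, $\tilde{f}$ is in fact $G$-invariant, i.e. $\tilde{f} \in L[x]^{G}$; and $c^{-1} \neq 0$ gives $L(\tilde{f}) = L(f_{0}) = L(x)^{N}$.

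Finally I would conclude through the tower for the normal subgroup $N$, namely $L(x)^{G} = (L(x)^{N})^{\bar{G}} = L(\tilde{f})^{\bar{G}}$. Since $\bar{G}$ fixes $\tilde{f}$, acts faithfully on $L$ with fixed field $L^{G} = L^{\bar{G}}$, and $\tilde{f}$ is transcendental over $L$, a degree count using Artin's theorem (both $[L(\tilde{f}):L(\tilde{f})^{\bar{G}}]$ and $[L(\tilde{f}):L^{G}(\tilde{f})]$ equal $|\bar{G}|$) yields $L(\tilde{f})^{\bar{G}} = L^{G}(\tilde{f})$. Taking $f := \tilde{f}$ proves $L(x)^{G} = L^{G}(f)$ with $f \in L[x]^{G}$. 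I expect the main obstacle to be the construction of $\tilde{f}$ in the second step: in the modular situation where $\textup{char}(k)$ divides $|G|$ the usual averaging operator is unavailable, so instead one must verify the multiplicative and additive cocycle conditions carefully and invoke the two forms of Hilbert's Theorem $90$. This characteristic-free input is exactly what makes the statement subtle and is the reason it is singled out for the modular rationality arguments of the paper.
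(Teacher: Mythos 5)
Your proof is correct. A preliminary remark: the paper contains no proof of this statement at all --- Theorem \ref{t2.2} is quoted verbatim from \cite[Theorem 3.1]{AHK} --- so the comparison can only be with the cited literature. Your two-step strategy (split off the kernel $N:=\{\sigma\in G:\sigma|_{L}=\textup{id}\}$, dispose of it via the orbit product $f_{0}:=\prod_{\nu\in N}\nu(x)$ so that $L(x)^{N}=L(f_{0})$, observe that $\bar{G}:=G/N$ again acts affinely on the new generator $f_{0}$ but now faithfully on $L$, and trivialize the resulting multiplicative and additive cocycles by the two characteristic-free forms of Hilbert's Theorem 90) is exactly the method used in this circle of papers: it is the one-variable affine analogue of the Speiser-type argument behind \cite[Theorem 1]{HK} (Theorem \ref{t2.1} above) and is in the spirit of the source \cite{AHK}. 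The classical alternative, which gives a shorter proof, is Miyata's minimal-degree argument: the orbit product $\prod_{\sigma\in G}\sigma(x)$ shows $L[x]^{G}$ contains a nonconstant polynomial, one takes $f\in L[x]^{G}$ of minimal positive degree, and a division argument shows directly that $L(x)^{G}=L^{G}(f)$; your cohomological route is longer but more explicit, producing an invariant generator of degree exactly $|N|$ together with the degree count $[L(x):L^{G}(\tilde{f})]=|N|\cdot[L:L^{G}]=|G|$ that confirms nothing is lost. Two steps you compress deserve a word but are not gaps: the identity $L[x]\cap L(f_{0})=L[f_{0}]$ follows from uniqueness of coordinates in the basis $1,x,\cdots,x^{|N|-1}$ of $L(x)$ over $L(f_{0})$ combined with the division algorithm (valid because the leading coefficient $\prod_{\nu\in N}a_{\nu}$ of $f_{0}$ is a unit), and the final application of Artin's theorem to $L(\tilde{f})$ needs $\bar{G}$ to act faithfully there, which holds since $\bar{G}$ acts faithfully on $L$ by the very definition of $N$.
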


\begin{defn}
\label{d2.3} \cite[Definition 3.1]{KW} Let $k$ be a field, $p$
an odd prime number, $a$ an integer satisfying that $(\mathbb{Z}/p\mathbb{Z})^{\times}=\langle\bar{a}\rangle$,
and $k(x_{0},x_{1},\cdots,x_{p-1})$ the rational function field of
$p$ variables over $k$. Define the $k$-automorphisms $\sigma,\tau$
on $k(x_{0},x_{1},\cdots,x_{p-1})$ by $\sigma:x_{i}\mapsto x_{i+1}$
and $\tau:x_{i}\mapsto x_{ai}$, where the indices of $x_{i}$ are
taken modulo $p$. Let $d$ be a positive divisor of $p-1$ and write
$p-1=de$. Define a group $G_{pd}:=\langle\sigma,\tau^{e}\rangle$.
As abstract groups, $G_{p}\simeq C_{p}$ the cyclic group (when $d=1$),
$G_{2p}\simeq D_{p}$ the dihedral group (when $d=2$), and $G_{p(p-1)}$
is isomorphic to the maximal solvable transitive subgroup of the symmetric
group $S_{p}$. In general, $G_{pd}$ is a semidirect product.
\end{defn}

\begin{thm}
\label{t2.4} \textup{\cite[Theorem 3.2]{KW}} Let $k(x_{0},x_{1},\cdots,x_{p-1})$
and $G_{pd}$ be the same as in Definition \ref{d2.3}. If $\textup{char}(k)=p>0$,
then the fixed subfield $k(x_{0},x_{1},\cdots,x_{p-1})^{G_{pd}}$
is $k$-rational.
\end{thm}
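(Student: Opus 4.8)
The plan is to linearise the action and to exploit the fact that, in characteristic $p$, the $p$-cycle $\sigma$ acts on the $k$-span $V=\langle x_{0},\dots,x_{p-1}\rangle$ as a single unipotent Jordan block. Writing $\Delta:=\sigma-1$, the relation $\sigma^{p}=1$ gives $\Delta^{p}=0$, while $\Delta^{p-1}x_{0}=\sum_{i=0}^{p-1}x_{i}\neq0$ (using $\binom{p-1}{k}\equiv(-1)^{k}\pmod p$), so $x_{0}$ is a cyclic vector. Hence the vectors $u_{j}:=\Delta^{j}x_{0}$ for $0\le j\le p-1$ form a $k$-basis of $V$, giving $k(x_{0},\dots,x_{p-1})=k(u_{0},\dots,u_{p-1})$, and $\sigma u_{j}=u_{j}+u_{j+1}$ (with the convention $u_{p}=0$).

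The key step is to show that $\tau^{e}$ is simultaneously upper triangular in this basis. From Definition \ref{d2.3} one checks $\tau\sigma\tau^{-1}=\sigma^{a}$, hence $\tau^{e}\Delta\tau^{-e}=\sigma^{b}-1$ with $b:=a^{e}$, an element of order $d$ in $(\mathbb{Z}/p\mathbb{Z})^{\times}$. Since $\tau^{e}x_{0}=x_{0}$ and $\sigma^{b}-1=(1+\Delta)^{b}-1=b\Delta+\binom{b}{2}\Delta^{2}+\cdots$, I would compute
\[
\tau^{e}u_{j}=(\sigma^{b}-1)^{j}x_{0}=b^{j}u_{j}+\sum_{m>j}c_{j,m}u_{m},\qquad c_{j,m}\in\mathbb{F}_{p}.
\]
Thus both generators are upper triangular in $u_{0},\dots,u_{p-1}$, with $\sigma$ unipotent and $\tau^{e}$ having diagonal entries $b^{j}$. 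Crucially, all these scalars already lie in the prime field $\mathbb{F}_{p}\subseteq k$, which is exactly where the modular hypothesis $\textup{char}(k)=p$ helps.

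With the triangular form in hand, set $L_{j}:=k(u_{j},u_{j+1},\dots,u_{p-1})$ and $L_{p}:=k$, so that $k=L_{p}\subset L_{p-1}\subset\cdots\subset L_{0}=k(x_{0},\dots,x_{p-1})$. Each $L_{j}$ is $G_{pd}$-stable, because both $\sigma$ and $\tau^{e}$ send every $u_{m}$ with $m\ge j$ into $L_{j}$. Moreover $G_{pd}$ acts on $L_{j}=L_{j+1}(u_{j})$ affinely over $L_{j+1}$: for each $g\in G_{pd}$ one has $g\cdot u_{j}=\lambda_{g}u_{j}+(\text{element of }L_{j+1})$ with $\lambda_{g}\in\mathbb{F}_{p}^{\times}\subseteq L_{j+1}^{\times}$. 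I would then run a descending induction on $j$ using Theorem \ref{t2.2}: starting from $L_{p}^{G_{pd}}=k$, at each step $L_{j}^{G_{pd}}=L_{j+1}^{G_{pd}}(f_{j})$ for some $f_{j}\in L_{j+1}[u_{j}]^{G_{pd}}$ that is transcendental over $L_{j+1}^{G_{pd}}$ (by the transcendence-degree count). Hence if $L_{j+1}^{G_{pd}}$ is $k$-rational so is $L_{j}^{G_{pd}}$, and after $p$ steps $L_{0}^{G_{pd}}=k(x_{0},\dots,x_{p-1})^{G_{pd}}$ is $k$-rational.

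The main obstacle is the middle step: producing a single basis in which the normal generator $\sigma$ is unipotent triangular and the non-normal generator $\tau^{e}$ is simultaneously triangular with a prescribed diagonal. Once the $\mathbb{F}_{p}$-scalars $b^{j}$ are identified along the diagonal, everything else reduces to a routine flag/AHK induction; the fact that these eigenvalues automatically lie in $k$ is precisely what makes the argument go through in the modular setting, with no hypothesis on roots of unity in $k$.
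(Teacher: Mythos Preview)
Your argument is correct. You and the paper ultimately exploit the same modular phenomenon---that in characteristic $p$ the $p$-cycle $\sigma$ is unipotent and the eigenvalues of $\tau^{e}$ land in $\mathbb{F}_{p}\subseteq k$---but you organise the reduction differently.

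The paper's proof is shorter: it first builds a single $\tau^{e}$-eigenvector $u$ (so that $\tau^{e}\cdot u=a^{-e}u$), then sets $y=\sum_{i}\sigma^{i}u$ and $z=\sum_{i}i\,\sigma^{i}u$, checks that $G_{pd}$ acts faithfully and affinely on $k(y,z)$, and invokes Theorem~\ref{t2.1} once to collapse the remaining $p-2$ variables; the residual two-variable problem is then dispatched by Theorem~\ref{t2.2} and L\"uroth. Your approach instead exhibits the entire flag $L_{p}\subset L_{p-1}\subset\cdots\subset L_{0}$ on which both generators are simultaneously upper triangular (via $u_{j}=(\sigma-1)^{j}x_{0}$ and the conjugation identity $\tau^{e}(\sigma-1)\tau^{-e}=\sigma^{b}-1$), and then peels off one variable at a time using Theorem~\ref{t2.2} alone, $p$ times. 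What you gain is that you never call on Theorem~\ref{t2.1} or any faithfulness check, and the triangular structure is completely explicit; what the paper gains is brevity, since Theorem~\ref{t2.1} absorbs $p-2$ of your inductive steps in one stroke. Both routes hinge on the same observation you single out at the end: the diagonal scalars $b^{j}$ already lie in the prime field, so no root-of-unity hypothesis on $k$ is needed.
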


\begin{defn}
\label{d2.5} \cite[Definition 3.1 and 3.2]{KWZ} Let $G$ and $H$
be finite groups such that $G$ acts on a finite set $X$ from the
left. Let $A$ be the set of all functions from $X$ to $H$, then
$G$ acts naturally on $A$ by $(g\cdot a)(x):=a(g^{-1}\cdot x)$,
where $g\in G$, $a\in A$, and $x\in X$. Note that $A$ may be identified
with the direct product of $|X|$ copies of $H$, thus any $a\in A$
may be written as $a=(a_{x}:x\in X)$. Under this identification,
the group $G$ acts on $A$ by $(g\cdot a)_{x}:=a_{g^{-1}\cdot x}$
where $g\in G$, $a\in A$, and $(g\cdot a)_{x}$ is the $x$-component
of $g\cdot a$.

The \textit{wreath product} $H\wr_{X}G$ is defined as the semidirect
product $A\rtimes G$, where $A$ is the normal subgroup with an action
of $G$ from the left.

Furthermore, when $G$ and $H$ are groups acting on the sets $X$
and $Y$ from the left, the wreath product $H\wr_{X}G:=A\rtimes G$
acts on $Y\times X$ by defining $(a,g)\cdot(y,x):=(a_{g\cdot x}\cdot y,g\cdot x)$
for any $g\in G$, $a\in A$, $x\in X$, and $y\in Y$.
\end{defn}

Adopting the notation of Definition \ref{d2.5}, we write $X_{m}:=\{1,\cdots,m\}$
(the set consisting of $m$ elements), and $Y_{n}:=\{1,\cdots,n\}$
(the set consisting of $n$ elements). If $G\leq S_{m}$ and $H\leq S_{n}$,
then we may regard $H\wr_{X_{m}}G$ as a subgroup of $S_{mn}$ because
$Y_{n}\times X_{m}$ is a set consisting of $mn$ elements. With this
understanding, we have the following theorem.
\begin{thm}
\label{t2.6} \textup{\cite[Theorem 3.5]{KWZ}} Let $k$ be a field,
$G\leq S_{m}$ and $H\leq S_{n}$ act on the rational function fields
$k(x_{1},\cdots,x_{m})$ and $k(y_{1},\cdots,y_{n})$ respectively.
Assume that both $k(x_{1},\cdots,x_{m})^{G}$ and $k(y_{1},\cdots,y_{n})^{H}$
are $k$-rational. Then the wreath product $H\wr_{X_{m}}G$ acts on
the rational function field $k(z_{i,j}:1\leq i\leq m,1\leq j\leq n)$,
and the fixed subfield $k(z_{i,j}:1\leq i\leq m,1\leq j\leq n)^{H\wr_{X_{m}}G}$
is $k$-rational.
\end{thm}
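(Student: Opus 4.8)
The plan is to take invariants in two stages through the normal subgroup $A\trianglelefteq H\wr_{X_m}G$, where $A$ is the base group (identified with the product of $m$ copies of $H$) and $(H\wr_{X_m}G)/A\cong G$; this reduces the problem to the two hypotheses, on $H$ and on $G$, separately. For the first stage I compute $k(z_{i,j})^A$. Writing $z_{i,j}$ for the variable attached to the point $(y,x)=(j,i)\in Y_n\times X_m$, the action formula in Definition~\ref{d2.5} shows that an element $a=(a_1,\dots,a_m)\in A$ (with trivial $G$-part) sends $z_{i,j}\mapsto z_{i,\,a_i\cdot j}$; thus the $i$-th factor $H_i\cong H$ of $A$ acts only on the block $\{z_{i,1},\dots,z_{i,n}\}$, by exactly the permutation action of $H$ on $k(y_1,\dots,y_n)$. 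Fixing generators $w_1,\dots,w_n$ with $k(y_1,\dots,y_n)^H=k(w_1,\dots,w_n)$ (rational by hypothesis), I let $w_\ell^{(i)}$ denote the same rational expression evaluated on the block variables $z_{i,1},\dots,z_{i,n}$. I would then peel off the factors $H_m,H_{m-1},\dots,H_1$ one at a time: at each stage $H_i$ acts trivially on the field generated by the remaining blocks and permutes the $i$-th block like $H$. Here I use the elementary fact that if $k(y)^H=k(w)$ and $L/k$ is purely transcendental with $H$ acting trivially on $L$, then $L(y)^H=L(w)$ (a degree comparison: $[L(y):L(w)]=[k(y):k(w)]=|H|=[L(y):L(y)^H]$). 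Iterating yields
\[
k\bigl(z_{i,j}:1\le i\le m,\,1\le j\le n\bigr)^A=k\bigl(w_\ell^{(i)}:1\le i\le m,\,1\le\ell\le n\bigr),
\]
a rational function field in $mn$ variables over $k$.

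For the second stage, the quotient $G$ acts on this $A$-fixed field, and $k(z_{i,j})^{H\wr_{X_m}G}=(k(z_{i,j})^A)^G$. Choosing the coset lift with trivial base part gives $g\cdot z_{i,j}=z_{g\cdot i,\,j}$, hence $g\cdot w_\ell^{(i)}=w_\ell^{(g\cdot i)}$; this is independent of the lift because $A$ acts trivially on $k(z_{i,j})^A$. So for each fixed $\ell$ the tuple $(w_\ell^{(1)},\dots,w_\ell^{(m)})$ transforms exactly as the permutation representation of $G\le S_m$ on $m$ letters, and the $G$-action is the diagonal action on $n$ copies of that representation. To finish I would invoke Theorem~\ref{t2.1}. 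Put $L:=k(w_1^{(i)}:1\le i\le m)$, the first copy; then $k(w_\ell^{(i)})=L(w_\ell^{(i)}:2\le\ell\le n)$ is rational over $L$, the restriction of $G$ to $L$ is faithful (since $G\le S_m$), and $G$ acts on the remaining $m(n-1)$ variables linearly (by permutation matrices with entries in $k$). Theorem~\ref{t2.1} then produces $G$-invariant generators $\zeta_1,\dots,\zeta_{m(n-1)}$ with $k(w_\ell^{(i)})=L(\zeta_1,\dots,\zeta_{m(n-1)})$, whence
\[
k(z_{i,j})^{H\wr_{X_m}G}=L^G(\zeta_1,\dots,\zeta_{m(n-1)}).
\]
Finally $L^G=k(w_1^{(i)}:1\le i\le m)^G$ is $k$-isomorphic to $k(x_1,\dots,x_m)^G$, which is $k$-rational by hypothesis, so the whole fixed field is rational over a $k$-rational field and therefore $k$-rational.

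I expect the main obstacle to be the second stage. One must check that the induced quotient action on $k(z_{i,j})^A$ is well-defined independently of the chosen lift, that it is genuinely the diagonal permutation action on $n$ copies, and --- the real point --- that Theorem~\ref{t2.1} applies so that all copies beyond the first can be absorbed into $G$-fixed variables, leaving precisely the single-copy fixed field $k(x_1,\dots,x_m)^G$ to which the hypothesis on $G$ applies. The first-stage base-change lemma is routine but also deserves care, since it is exactly what transports the hypothesis on $H$ through the purely transcendental extension coming from the other blocks.
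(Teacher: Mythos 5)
Your proof is correct: the blockwise computation of the invariants under the base group $A$ (via the linear-disjointness degree count for $L(y_1,\cdots,y_n)^H=L(w_1,\cdots,w_n)$) and the absorption of the extra $n-1$ copies of the permutation representation via Theorem \ref{t2.1}, leaving exactly $k(x_{1},\cdots,x_{m})^{G}$, are both sound. The paper itself gives no proof of Theorem \ref{t2.6} (it is quoted from \cite[Theorem 3.5]{KWZ}), and your two-stage argument is essentially the proof given in that reference.
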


In a similar way, if $G$ and $H$ act on the sets $X$ and $Y$ respectively
from the left, then the direct product $G\times H$ acts on $X\times Y$
by $(g,h)\cdot(x,y):=(g\cdot x,h\cdot y)$.
\begin{thm}
\label{t2.7} \textup{\cite[Theorem 3.6]{KWZ}} Let $k$, $G$, and
$H$ be the same as in Theorem \ref{t2.6}. Then the direct product
$G\times H$ acts on the rational function field $k(z_{i,j}:1\leq i\leq m,1\leq j\leq n)$
such that the fixed subfield $k(z_{i,j}:1\leq i\leq m,1\leq j\leq n)^{G\times H}$
is $k$-rational.
\end{thm}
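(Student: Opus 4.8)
The plan is to use that $G$ and $H$ commute, so that $k(z_{i,j})^{G\times H}=\bigl(k(z_{i,j})^{\{e\}\times H}\bigr)^{G}$, and to compute the inner $H$-invariants completely and explicitly before turning to the residual $G$-action. Write $F:=k(z_{i,j}:1\le i\le m,\ 1\le j\le n)$. I would base the $H$-reduction on the column sums $s_{j}:=\sum_{i=1}^{m}z_{i,j}$, because these are fixed by $G$ (indeed $g\cdot s_{j}=\sum_{i}z_{g\cdot i,j}=s_{j}$) while $H$ permutes them exactly as it permutes the $y_{j}$, namely $h\cdot s_{j}=s_{h\cdot j}$. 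Thus $k(s_{1},\dots,s_{n})$ is an $H$-stable copy of the permutation field $k(y_{1},\dots,y_{n})$, faithful for $H$, with $k(s_{1},\dots,s_{n})^{H}=k(w_{1},\dots,w_{n})$ rational by hypothesis. Eliminating the last row through $z_{m,j}=s_{j}-\sum_{i<m}z_{i,j}$ gives $F=k(s_{1},\dots,s_{n})\bigl(z_{i,j}:1\le i\le m-1\bigr)$, and $H$ still permutes these variables by $j\mapsto h\cdot j$.

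Next I would untwist the $H$-action by a Vandermonde substitution. Setting $\zeta_{i,b}:=\sum_{j=1}^{n}s_{j}^{\,b}z_{i,j}$ for $1\le i\le m-1$ and $0\le b\le n-1$, each $\zeta_{i,b}$ is $H$-invariant, and since the Vandermonde matrix $(s_{j}^{\,b})$ is invertible over $k(s_{1},\dots,s_{n})$ the $z_{i,j}$ are recovered from the $\zeta_{i,b}$. Hence $F=k(s_{1},\dots,s_{n})(\zeta_{i,b})$ with $H$ fixing every $\zeta_{i,b}$, so $F^{H}=k(w_{1},\dots,w_{n})(\zeta_{i,b}:1\le i\le m-1,\ 0\le b\le n-1)$ is rational over $k$ of the correct transcendence degree $mn$. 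I then read off the residual $G$-action: since $G$ fixes each $s_{j}$ it fixes $k(w_{1},\dots,w_{n})$ pointwise, and $g\cdot\zeta_{i,b}=\sum_{j}s_{j}^{\,b}z_{g\cdot i,j}=\zeta_{g\cdot i,b}$ when $g\cdot i\le m-1$, while for $g\cdot i=m$ one gets $\sum_{j}s_{j}^{\,b}z_{m,j}=p_{b+1}-\sum_{i'\le m-1}\zeta_{i',b}$, where $p_{b+1}:=\sum_{j}s_{j}^{\,b+1}$ is a power sum and so lies in $k(s_{1},\dots,s_{n})^{H}=k(w_{1},\dots,w_{n})$. Thus over the base field $k(w_{1},\dots,w_{n})$ the group $G$ acts \emph{affinely} on the $\zeta_{i,b}$: for each fixed $b$ it permutes $\zeta_{1,b},\dots,\zeta_{m,b}$ (with $\zeta_{m,b}:=p_{b+1}-\sum_{i<m}\zeta_{i,b}$) subject to the single relation $\sum_{i}\zeta_{i,b}=p_{b+1}$.

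With this affine picture I would invoke Theorem \ref{t2.1}. Taking the $b=0$ block $\{\zeta_{i,0}=\sum_{j}z_{i,j}\}$ (the row sums) as the base, $G$ acts on it faithfully and acts affinely on the blocks $b\ge1$ over it, so Theorem \ref{t2.1} reduces the rationality of $F^{G\times H}=(F^{H})^{G}$ to that of the single block, namely of $k(w_{1},\dots,w_{n})(\zeta_{1,0},\dots,\zeta_{m,0})^{G}$ subject to $\sum_{i}\zeta_{i,0}=p_{1}$. This is exactly the invariant field of the reduced (deleted) permutation representation of $G$ over the rational base $k(w_{1},\dots,w_{n})$.

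The hard part will be this final step. The hypothesis supplies only the rationality of the \emph{full} permutation field $k(x_{1},\dots,x_{m})^{G}$, whereas here the trivial summand has been frozen at the power sum $p_{1}\in k(w_{1},\dots,w_{n})$; the crux is to promote ``$k(x_{1},\dots,x_{m})^{G}$ is $k$-rational'' to rationality relative to the coordinate $\sum_{i}x_{i}$, i.e.\ to split the trivial summand off rationally. By base change the full permutation field (with the traces left as free parameters) is rational over $k(w_{1},\dots,w_{n})$, so what remains is to verify that specializing these traces to the $p_{b+1}$ preserves rationality; in the modular setting of this paper the relevant affine action is fixed-point-free, which is precisely the Gasch\"utz-type regime where Theorem \ref{t2.1} and Theorem \ref{t2.2} apply directly. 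A fully symmetric alternative is to work from the start over $k(c_{i},s_{j})$ with $c_{i}:=\sum_{j}z_{i,j}$, keeping both trivial directions at the cost of the single relation $\sum_{i}c_{i}=\sum_{j}s_{j}$; this again localizes the entire difficulty in the same reduced-representation step.
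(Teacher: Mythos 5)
Your reductions up to the last step are correct, and pleasantly explicit: the column sums $s_{j}$ are $G$-fixed and $H$-permuted; the twisted sums $\zeta_{i,b}=\sum_{j}s_{j}^{b}z_{i,j}$ are genuinely $H$-invariant and, because the $s_{j}$ are algebraically independent (so the Vandermonde matrix $(s_{j}^{b})$ is invertible over $k(s_{1},\cdots,s_{n})$), one gets $F^{H}=k(w_{1},\cdots,w_{n})(\zeta_{i,b})$ by the usual degree count; the residual $G$-action is affine over $k(w_{1},\cdots,w_{n})$ as you compute; and Theorem \ref{t2.1} does apply with the $b=0$ block as base, since that block is $G$-stable and $G$-faithful for $m\geq2$ (the case $m=1$ being trivial). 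But the final step is a genuine gap, which you name without closing. You must prove that $k(w_{1},\cdots,w_{n})(\zeta_{1,0},\cdots,\zeta_{m-1,0})^{G}$ is $k$-rational, where $G$ permutes the affine coordinates $\zeta_{1,0},\cdots,\zeta_{m-1,0},\,p_{1}-\sum_{i<m}\zeta_{i,0}$ with the trace frozen at the \emph{specific} element $p_{1}=\sum_{j}s_{j}$ of the base. Neither of your two closing suggestions is an argument: rationality is notoriously not preserved under specialization, and $p_{1}$ is a fixed element of $k(w_{1},\cdots,w_{n})$, not a generic parameter; and the appeal to a ``fixed-point-free, Gasch\"utz-type regime'' is off target, because Theorem \ref{t2.7} is asserted for \emph{every} field $k$ and all $G\leq S_{m}$, $H\leq S_{n}$ with no modular hypothesis, and Theorems \ref{t2.1} and \ref{t2.2} do not by themselves split a trivial summand off a permutation module. (Note also that the present paper gives no proof of this statement; it quotes \cite{KWZ}, so your argument stands or falls on its own.)

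The gap is closable by your methods except in one configuration, and it is worth seeing exactly where the line falls. If $\textup{char}(k)\nmid n$: set $\tilde{s}_{j}:=s_{j}-s_{n}$ for $j<n$; then $k(s_{1},\cdots,s_{n})=k(\tilde{s}_{1},\cdots,\tilde{s}_{n-1})(p_{1})$ since $ns_{n}=p_{1}-\sum_{j<n}\tilde{s}_{j}$, the field $E:=k(\tilde{s}_{1},\cdots,\tilde{s}_{n-1})^{H}$ satisfies $k(w_{1},\cdots,w_{n})=E(p_{1})$ with $p_{1}$ transcendental over $E$, so in the final field the $m$ elements $\zeta_{1,0},\cdots,\zeta_{m-1,0},\,p_{1}-\sum_{i<m}\zeta_{i,0}$ are algebraically independent over $E$ and genuinely permuted by $G$; hence the final field equals $E(u_{1},\cdots,u_{m})$ where $k(x_{1},\cdots,x_{m})^{G}=k(u_{1},\cdots,u_{m})$, and $E(u_{1})(u_{2},\cdots,u_{m})\cong E(p_{1})(u_{2},\cdots,u_{m})=k(w_{1},\cdots,w_{n})(u_{2},\cdots,u_{m})$ is $k$-rational, two simple purely transcendental extensions of $E$ being $E$-isomorphic. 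Symmetrically, if $\textup{char}(k)\nmid m$, replace $\zeta_{i,0}$ by $\zeta_{i,0}-p_{1}/m$ to convert the affine action into the linear reduced permutation action, and let one of the free variables $w_{j}$ absorb the trivial summand via $k(x_{1},\cdots,x_{m})^{G}=k\bigl(x_{i}-\tfrac{1}{m}\sum_{i'}x_{i'}\bigr)^{G}\bigl(\sum_{i}x_{i}\bigr)$. But when $\textup{char}(k)=p$ divides both $m$ and $n$, the vector $(1,\cdots,1)$ lies inside the trace-zero hyperplane on both sides, neither splitting exists, and your proposal offers nothing, while the theorem still asserts rationality there (e.g.\ $p=2$, $m=2$, $n=4$, $H=A_{4}$, which is not covered by Theorem \ref{t1.2} either); that doubly modular case is precisely the characteristic-free content for which the argument of \cite{KWZ} is needed. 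For the uses made in this paper ($\{m,n\}=\{2,7\}$, so $\gcd(m,n)=1$), your argument with the above repair would suffice, but as a proof of Theorem \ref{t2.7} as stated it is incomplete.
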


\begin{lem}
\label{l2.8} \textup{\cite[page 245]{Ha1}} Let $k$ be a field,
$k(x_{1},\cdots,x_{n-1})$ the rational function field of $n-1$ variables
over $k$. Suppose that $\sigma$ is a $k$-automorphism on $k(x_{1},\cdots,x_{n-1})$
defined by $\sigma:x_{1}\mapsto x_{2}\mapsto\cdots\mapsto x_{n-1}\mapsto1/\prod_{i=1}^{n-1}x_{i}$.
Then there exist $y_{1},\cdots,y_{n-1}\in k(x_{1},\cdots,x_{n-1})$
such that $k(x_{1},\cdots,x_{n-1})=k(y_{1},\cdots,y_{n-1})$ and $\sigma:y_{1}\mapsto y_{2}\mapsto\cdots\mapsto y_{n-1}\mapsto1-\sum_{i=1}^{n-1}y_{i}$.
\end{lem}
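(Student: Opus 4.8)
The plan is to turn $\sigma$ into a genuine cyclic permutation of $n$ coordinates and then transport it, by an explicit birational substitution, from the multiplicative ``norm-one'' model to the additive ``trace-one'' model. First I would introduce the auxiliary element $x_{n}:=1/\prod_{i=1}^{n-1}x_{i}$, so that $\prod_{i=1}^{n}x_{i}=1$ and $k(x_{1},\dots,x_{n-1})=k(x_{1},\dots,x_{n})$. A direct computation gives $\sigma(x_{n})=x_{1}$, so $\sigma$ acts as the $n$-cycle $x_{1}\mapsto x_{2}\mapsto\cdots\mapsto x_{n}\mapsto x_{1}$, with all indices read modulo $n$. The goal becomes to produce $y_{1},\dots,y_{n}$ with $\sum_{i=1}^{n}y_{i}=1$, with $\sigma(y_{i})=y_{i+1}$, and with $k(y_{1},\dots,y_{n-1})=k(x_{1},\dots,x_{n-1})$; since then $y_{n}=1-\sum_{i=1}^{n-1}y_{i}$ and $\sigma(y_{n-1})=y_{n}$, this is precisely the asserted conclusion.

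For the substitution I would set, for each $i$ modulo $n$,
\[
p_{i}:=x_{i}+x_{i}x_{i+1}+x_{i}x_{i+1}x_{i+2}+\cdots+x_{i}x_{i+1}\cdots x_{i+n-1},
\]
the sum of the $n$ ascending partial products beginning at $x_{i}$ (its last term is the full product $x_{1}\cdots x_{n}=1$). By construction $\sigma(p_{i})=p_{i+1}$. The decisive point is the telescoping identity $p_{i}=x_{i}\,p_{i+1}$: multiplying $p_{i+1}$ by $x_{i}$ shifts each partial product up by one factor, and the single term that acquires length $n+1$ collapses back to $x_{i}$ because any $n$ consecutive variables have product $1$, so the result reassembles $p_{i}$ exactly. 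I would then set $P:=\sum_{i=1}^{n}p_{i}$, which is $\sigma$-invariant, and define $y_{i}:=p_{i}/P$.

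With these definitions the three required properties fall out. First, $\sum_{i=1}^{n}y_{i}=P/P=1$ and $\sigma(y_{i})=\sigma(p_{i})/\sigma(P)=p_{i+1}/P=y_{i+1}$, which gives the cyclic action and in particular $\sigma(y_{n-1})=y_{n}=1-\sum_{i=1}^{n-1}y_{i}$. Second, the telescoping identity yields the clean inverse $x_{i}=p_{i}/p_{i+1}=y_{i}/y_{i+1}$ for every $i$; one checks consistency since $\prod_{i=1}^{n}(y_{i}/y_{i+1})=1$ matches $\prod_{i=1}^{n}x_{i}=1$, and $\prod_{i=1}^{n-1}(y_{i}/y_{i+1})=y_{1}/y_{n}$ recovers $x_{n}=y_{n}/y_{1}$. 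Hence each $x_{i}$ lies in $k(y_{1},\dots,y_{n-1})$ (recall $y_{n}\in k(y_{1},\dots,y_{n-1})$), while every $y_{i}$ lies in $k(x_{1},\dots,x_{n-1})$ by definition, so $k(x_{1},\dots,x_{n-1})=k(y_{1},\dots,y_{n-1})$; comparing the common transcendence degree $n-1$ shows $y_{1},\dots,y_{n-1}$ form a transcendence basis. Because every formula is a ratio of monomials with integer exponents, the whole argument is characteristic-free, as the hypothesis on $k$ requires.

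The step I expect to be the main obstacle is the discovery and verification of the right substitution, rather than any later bookkeeping. It is not a priori clear that a monomial (multiplicative) cyclic action should be equivalent to an affine (additive) one, and monomial actions are in general delicate. The whole proof hinges on choosing the partial-product sums $p_{i}$ and normalizing by $P$: this single choice simultaneously forces the affine relation $\sum_{i}y_{i}=1$ and, through the identity $p_{i}=x_{i}p_{i+1}$, makes the inverse $x_{i}=y_{i}/y_{i+1}$ rational. Establishing that telescoping identity carefully (in particular the collapse of the length-$(n+1)$ term) and confirming that the resulting map is genuinely invertible is where essentially all the content resides.
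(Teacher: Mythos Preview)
Your argument is correct and follows essentially the same idea as the paper's proof: pass to the $n$-cycle on the norm-one torus and use partial products to linearize the action. The paper's version is slightly more economical: it sets $w:=1+x_{1}+x_{1}x_{2}+\cdots+x_{1}\cdots x_{n-1}$ (which is exactly your $p_{1}$) and takes $y_{1}:=1/w$, $y_{i}:=x_{1}\cdots x_{i-1}/w$, i.e.\ it divides the individual partial products by their sum rather than first forming your cyclic sums $p_{i}$ and dividing by $P=\sum_i p_i$. Both choices give $\sum y_i=1$, a cyclic $\sigma$-action, and the inverse $x_i=y_{i+1}/y_i$ (respectively $y_i/y_{i+1}$ in your normalization), so the two substitutions differ only by this harmless renormalization.
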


\begin{proof}
For the convenience of the reader, we include the proof of \cite{Ha1}.
Define $w:=1+x_{1}+x_{1}x_{2}+\cdots+x_{1}x_{2}\cdots x_{n-1}$, $y_{1}:=1/w$,
and $y_{i}:=x_{1}x_{2}\cdots x_{i-1}/w$ for $2\leq i\leq n$. It
follows that $\sum_{i=1}^{n}y_{i}=1$ and $\sigma:y_{1}\mapsto y_{2}\mapsto\cdots\mapsto y_{n}\mapsto y_{1}$.
\end{proof}
\begin{lem}
\label{l2.9} Let $k$ be a field, $p$ an odd prime number, $k(y_{1},\cdots,y_{p-1})$
the rational function field of $p-1$ variables over $k$. Suppose
that $\sigma$ is a $k$-automorphism on $k(y_{1},\cdots,y_{p-1})$
defined by $\sigma:y_{1}\mapsto y_{2}\mapsto\cdots\mapsto y_{p-1}\mapsto1-\sum_{i=1}^{p-1}y_{i}$.
\begin{enumerate}
\item If $\textup{char}(k)=p$, then the fixed subfield $k(y_{1},\cdots,y_{p-1})^{\left\langle \sigma\right\rangle }$
is $k$-rational.
\item If $\textup{char}(k)\neq p$, then the field $k(y_{1},\cdots,y_{p-1})^{\left\langle \sigma\right\rangle }(z)$
is $k$-isomorphic to $k(C_{p})$, where $z$ is an element transcendental
over $k(y_{1},\cdots,y_{p-1})^{\left\langle \sigma\right\rangle }$.
\end{enumerate}
\end{lem}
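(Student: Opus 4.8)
The plan is to treat the two parts by a common change of viewpoint and then diverge. Writing $y_p := 1-\sum_{i=1}^{p-1}y_i$, one checks that $\sigma(y_p)=y_1$, so $\sigma$ cyclically permutes $y_1,\dots,y_p$, has order $p$, and preserves $\sum_{i=1}^{p}y_i=1$. Thus the action is a cyclic permutation of $p$ quantities summing to $1$, realized affinely on the $(p-1)$-dimensional field. Part (ii) will come from comparing this with the regular representation of $C_p$, while part (i) will require a direct triangularization that is special to characteristic $p$.

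For part (ii) (the case $\textup{char}(k)\neq p$) I would pass to the regular representation. Introduce variables $x_1,\dots,x_p$ with $\sigma:x_i\mapsto x_{i+1}$ (indices mod $p$), so that $k(x_1,\dots,x_p)^{\langle\sigma\rangle}=k(C_p)$. Put $s:=\sum_{i=1}^{p}x_i$, which is $\sigma$-fixed, and $w_i:=x_i/s$; then $\sum_i w_i=1$, $\sigma:w_i\mapsto w_{i+1}$, and $k(x_1,\dots,x_p)=k(w_1,\dots,w_{p-1})(s)$ with $s$ transcendental and fixed. Consequently $w_i\leftrightarrow y_i$ gives a $\sigma$-equivariant isomorphism $(k(w_1,\dots,w_{p-1}),\sigma)\cong(k(y_1,\dots,y_{p-1}),\sigma)$, and since $\sigma$ fixes $s$ one has $k(C_p)=k(w_1,\dots,w_{p-1})(s)^{\langle\sigma\rangle}=k(w_1,\dots,w_{p-1})^{\langle\sigma\rangle}(s)$. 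Transporting along the isomorphism and setting $z:=s$ yields $k(y_1,\dots,y_{p-1})^{\langle\sigma\rangle}(z)\cong k(C_p)$, which is exactly the assertion. (This reduction in fact works in any characteristic, but in the non-modular case it is the best one can say.)

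For part (i) (the case $\textup{char}(k)=p$) the same reduction only gives that $k(y_1,\dots,y_{p-1})^{\langle\sigma\rangle}(z)$ is $k$-rational, via Kuniyoshi's Theorem \ref{t1.2}(i), i.e.\ stable rationality; to obtain genuine rationality I would triangularize the affine action directly. Consider the $p$-dimensional $k$-space $U:=k\cdot 1\oplus\bigoplus_{i=1}^{p-1}k\,y_i$ of affine-linear functions, which is $\sigma$-stable. In characteristic $p$ the operator $N:=\sigma-1$ satisfies $N^{p}=(\sigma-1)^{p}=\sigma^{p}-1=0$ on $U$, so $N$ is nilpotent; its kernel, the space of $\sigma$-invariant affine-linear functions, is $k\cdot 1$, because an invariant $\sum_i\alpha_i y_i$ on $\{\sum_i y_i=1\}$ forces all coefficients equal and hence the function constant. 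A nilpotent operator with one-dimensional kernel is a single Jordan block, so $U$ is a single Jordan block of size $p$; in particular $N^{p-1}\neq 0$, and one may take $y_1$ as a cyclic vector, with $N^{p-1}y_1=c\in k^{\times}$ a nonzero constant. Setting $v_j:=N^{p-2-j}y_1$ for $0\le j\le p-2$ gives $k(y_1,\dots,y_{p-1})=k(v_0,\dots,v_{p-2})$ together with the triangular additive action $\sigma(v_0)=v_0+c$ and $\sigma(v_j)=v_j+v_{j-1}$ for $1\le j\le p-2$.

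Finally I would compute the fixed field by peeling off variables from the top. Each subfield $k(v_0,\dots,v_j)$ is $\sigma$-stable, and $\sigma(v_j)=v_j+v_{j-1}$ has the form required by Theorem \ref{t2.2} with base field $L=k(v_0,\dots,v_{j-1})$, so repeated application gives $k(v_0,\dots,v_{p-2})^{\langle\sigma\rangle}=k(v_0)^{\langle\sigma\rangle}(f_1)\cdots(f_{p-2})$ for suitable $f_j$. The base step is the Artin--Schreier translation $\sigma(v_0)=v_0+c$, whose invariant field is $k(v_0)^{\langle\sigma\rangle}=k\bigl(v_0^{p}-c^{p-1}v_0\bigr)$ and is $k$-rational; hence the whole fixed field is $k$-rational. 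The hard part, and the sole place where $\textup{char}(k)=p$ is essential, is the single-block claim $c\neq 0$: in characteristic prime to $p$ the operator $N$ is semisimple rather than nilpotent, the cyclic action acquires the fixed point $y_i=1/p$, $U$ fails to be a single Jordan block, and one cannot improve on the reduction to $k(C_p)$ in part (ii).
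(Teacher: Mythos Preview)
Your argument is correct and follows the same overall strategy as the paper, but the execution differs in both parts. For (i), the paper simply writes down the single element $u:=\sum_{i=1}^{p-1}(p-i)y_i$, checks that $\sigma(u)=u+1$, and then applies Theorem~\ref{t2.1} once to $k(y_1,\dots,y_{p-1})=k(u)(y_2,\dots,y_{p-1})$, finishing with L\"uroth on $k(u)$. Your Jordan-block analysis is the conceptual explanation of why such a $u$ exists; in fact your $v_0=N^{p-2}y_1$ is exactly the paper's $u$, and the one step you left implicit---that $y_1$ is genuinely a cyclic vector, not merely that some cyclic vector exists---is the computation $N^{p-1}y_1=\sum_{j=0}^{p-1}\sigma^{j}y_1=\sum_{i=1}^{p}y_i=1\ne 0$ in characteristic $p$. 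Your repeated use of Theorem~\ref{t2.2} is equivalent to the paper's single use of Theorem~\ref{t2.1} but more hands-on. For (ii), the paper linearizes both sides by the additive shift $z_i:=y_i-1/p$ and $v_i:=x_i-v/p$ (which needs $\textup{char}(k)\ne p$) and then invokes Theorem~\ref{t2.2}; your multiplicative normalization $w_i:=x_i/s$ achieves the same identification and, as you note, is characteristic-free. Either way, the essential idea---comparing the affine cyclic action on the hyperplane $\sum y_i=1$ with the regular representation of $C_p$---is the same.
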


\begin{proof}
(i) Define $u:=\sum_{i=1}^{p-1}(p-i)y_{i}$. Then $\sigma:u\mapsto u+1$.
Apply Theorem \ref{t2.1} to $k(y_{1},\cdots,y_{p-1})=L(y_{2},\cdots,y_{p-1})$
with $L:=k(u)$. Since $L^{\left\langle \sigma\right\rangle }$ is
$k$-rational by L\"uroth's theorem, done.

(ii) Since $1/p$ exists and belongs to $k$, we may define $z_{i}:=y_{i}-1/p$
for $1\leq i\leq p-1$. It follows that $\sigma:z_{1}\mapsto z_{2}\mapsto\cdots\mapsto z_{p-1}\mapsto-\sum_{i=1}^{p-1}z_{i}$.
On the other hand, write $k(C_{p}):=k(x_{0},x_{1},\cdots,x_{p-1})^{\left\langle \sigma\right\rangle }$,
where $\sigma:x_{i}\mapsto x_{i+1}$ (the indices of $x_{i}$ are
taken modulo $p$). Define $v:=\sum_{i=0}^{p-1}x_{i}$ and $v_{i}:=x_{i}-v/p$
for $0\leq i\leq p-1$. Note that $\sum_{i=0}^{p-1}v_{i}=0$. Define
$K:=k(v_{0},v_{1},\cdots,v_{p-1})$. Then $k(x_{0},x_{1},\cdots,x_{p-1})=K(v)$
and apply Theorem \ref{t2.2}. Hence the result.
\end{proof}

\section{\label{sec3}Proofs of Theorem \ref{t1.5} and \ref{t1.7}}

We will prove Theorem \ref{t1.5} and \ref{t1.7} in this section.
\begin{proof}
[Proof of Theorem \ref{t1.5}] The action of $G$ on $k(t,s,x_{0},x_{1},\cdots,x_{n-1})$
is well-defined because $\sigma\cdot t^{d}=\chi^{d}(\sigma)t^{d}=t^{d}$
for any $\sigma\in G$. Let $H:=\ker(\chi)$. Then
\begin{eqnarray*}
 &  & k(t,s,x_{0},x_{1},\cdots,x_{n-1})^{G}\\
 & = & k(t,s,x_{0}/s,x_{1}/s,\cdots,x_{n-1}/s)^{G}\\
 & = & k(t^{d}/s^{n},t^{a}s^{b},x_{0}/s,x_{1}/s,\cdots,x_{n-1}/s)^{G}\text{ for some }a,b\in\mathbb{Z}\text{ such that }an+bd=1\\
 & = & k(t^{a}s^{b},x_{0}/s,x_{1}/s,\cdots,x_{n-1}/s)^{G}\text{ since }t^{d}/s^{n}={\textstyle \prod}_{i=0}^{n-1}(x_{i}/s)\\
 & = & \left(k(t^{a}s^{b},x_{0}/s,x_{1}/s,\cdots,x_{n-1}/s)^{H}\right)^{G/H}\\
 & = & \left(k(x_{0}/s,x_{1}/s,\cdots,x_{n-1}/s)^{H}(t^{a}s^{b})\right)^{G/H}\text{ since }H:=\ker(\chi)\text{ fixes }t^{a}s^{b}\\
 & = & \left(k(x_{0}/s,x_{1}/s,\cdots,x_{n-1}/s)^{H}\right)^{G/H}(u)\text{ by applying Theorem \ref{t2.2}}\\
 & = & k(x_{0}/s,x_{1}/s,\cdots,x_{n-1}/s)^{G}(u).
\end{eqnarray*}
\end{proof}
\begin{proof}
[Proof of Theorem \ref{t1.7}] Note that $\lambda_{1}^{4}=\lambda_{2}^{2}=1$.
Moreover, it is straightforward to check that
\begin{align*}
 & \lambda_{1}\sigma_{1}\lambda_{1}^{-1}=\sigma_{2}^{-1}, &  & \lambda_{1}\sigma_{2}\lambda_{1}^{-1}=\sigma_{1}, &  & \lambda_{2}\sigma_{1}\lambda_{2}^{-1}=\sigma_{2}, &  & \lambda_{2}\sigma_{2}\lambda_{2}^{-1}=\sigma_{1},\\
 & \rho_{1}\sigma_{1}\rho_{1}^{-1}=\sigma_{1}^{a}, &  & \rho_{1}\sigma_{2}\rho_{1}^{-1}=\sigma_{2}, &  & \rho_{2}\sigma_{1}\rho_{2}^{-1}=\sigma_{1}, &  & \rho_{2}\sigma_{2}\rho_{2}^{-1}=\sigma_{2}^{a}.
\end{align*}
Define $N:=\langle\sigma_{1},\sigma_{2}\rangle\leq G$, where $G$
is one of the four groups we consider. Clearly $N$ is a normal subgroup
of $G$. Define
\[
u_{i}:=\sum_{j=0}^{p-1}j^{i}x_{j}\qquad\text{ and }\qquad v_{i}:=\sum_{j=0}^{p-1}j^{i}y_{j},
\]
where $0\leq i\leq p-1$ (by convention we write $0^{0}=1$). We find
that
\[
k(x_{i},y_{i}:0\leq i\leq p-1)=k(u_{i},v_{i}:0\leq i\leq p-1),
\]
and
\begin{align*}
 & \sigma_{1}\cdot u_{i}=\sum_{j=0}^{p-1}j^{i}x_{j+1}=\sum_{j=0}^{p-1}(j-1)^{i}x_{j}, &  & \sigma_{1}\cdot v_{i}=v_{i},\\
 & \sigma_{2}\cdot u_{i}=u_{i}, &  & \sigma_{2}\cdot v_{i}=\sum_{j=0}^{p-1}j^{i}y_{j+1}=\sum_{j=0}^{p-1}(j-1)^{i}y_{j}.
\end{align*}
Consequently,
\begin{align*}
 & \sigma_{1}:u_{0}\mapsto u_{0},u_{1}\mapsto u_{1}-u_{0},u_{2}\mapsto u_{2}-2u_{1}+u_{0},\cdots,\\
 & \sigma_{2}:v_{0}\mapsto v_{0},v_{1}\mapsto v_{1}-v_{0},v_{2}\mapsto v_{2}-2v_{1}+v_{0},\cdots.
\end{align*}
Also note that
\[
\begin{aligned} & \lambda_{1}:u_{i}\mapsto(-1)^{i}v_{i},v_{i}\mapsto u_{i}, &  & \lambda_{2}:u_{i}\leftrightarrow v_{i}, &  & \rho_{1}:u_{i}\mapsto a^{-i}u_{i}, &  & \rho_{2}:v_{i}\mapsto a^{-i}v_{i}.\end{aligned}
\]
We will show that $k(x_{i},y_{i}:0\leq i\leq p-1)^{G}$ is $k$-rational.
By Theorem \ref{t2.1}, it suffices to show that $k(u_{0},v_{0},u_{1},v_{1})^{G}$
is $k$-rational. Recall that $N:=\langle\sigma_{1},\sigma_{2}\rangle\leq G$,
and
\[
\begin{aligned} & \sigma_{1}:\frac{u_{1}}{u_{0}}\mapsto\frac{u_{1}}{u_{0}}-1,\frac{v_{1}}{v_{0}}\mapsto\frac{v_{1}}{v_{0}}, &  & \sigma_{2}:\frac{u_{1}}{u_{0}}\mapsto\frac{u_{1}}{u_{0}},\frac{v_{1}}{v_{0}}\mapsto\frac{v_{1}}{v_{0}}-1.\end{aligned}
\]
We have
\[
k(u_{0},v_{0},u_{1},v_{1})^{N}=k\left(u_{0},v_{0},\frac{u_{1}}{u_{0}},\frac{v_{1}}{v_{0}}\right)^{N}=k(u_{0},v_{0},u_{1}',v_{1}'),
\]
where $u_{1}'$ and $v_{1}'$ are the Artin\textendash Schreier elements
defined by
\[
u_{1}':=\left(\frac{u_{1}}{u_{0}}\right)^{p}-\frac{u_{1}}{u_{0}}\qquad\text{ and }\qquad v_{1}':=\left(\frac{v_{1}}{v_{0}}\right)^{p}-\frac{v_{1}}{v_{0}}.
\]
Note that
\[
\left[k\left(u_{0},v_{0},\frac{u_{1}}{u_{0}},\frac{v_{1}}{v_{0}}\right):k(u_{0},v_{0},u_{1}',v_{1}')\right]=p^{2}=|N|,
\]
because
\[
u_{1}'=\prod_{i=0}^{p-1}\left(\frac{u_{1}}{u_{0}}+i\right)\qquad\text{ and }\qquad v_{1}'=\prod_{i=0}^{p-1}\left(\frac{v_{1}}{v_{0}}+i\right).
\]
Now we have
\[
\begin{aligned} & \lambda_{1}:u_{0}\leftrightarrow v_{0},u_{1}'\mapsto-v_{1}',v_{1}'\mapsto u_{1}', &  & \lambda_{2}:u_{0}\leftrightarrow v_{0},u_{1}'\leftrightarrow v_{1}', &  & \rho_{1}:u_{1}'\mapsto a^{-1}u_{1}', &  & \rho_{2}:v_{1}'\mapsto a^{-1}v_{1}'.\end{aligned}
\]
To show that $k(u_{0},v_{0},u_{1}',v_{1}')^{G/N}$ is $k$-rational,
we apply Theorem \ref{t2.1} again. It remains to show that $k(u_{1}',v_{1}')^{G/N}$
is $k$-rational. By applying Theorem \ref{t2.2} to $k(u_{1}',v_{1}')=k(u_{1}'/v_{1}')(v_{1}')$,
the problem is reduced to the rationality of $k(u_{1}'/v_{1}')^{G/N}$,
which is the case because of L\"uroth's theorem.

To find the explicit generator we illustrate the case $G=\left\langle \sigma_{1},\sigma_{2},\lambda_{1},\rho_{1}\rho_{2}\right\rangle $.
Since $N=\left\langle \sigma_{1},\sigma_{2}\right\rangle $, it follows
that $G/N=\left\langle \lambda_{1},\rho_{1}\rho_{2}\right\rangle $.
Note that the action of $G/N$ on $k(u_{1}'/v_{1}')$ is not faithful.
In fact, $\rho_{1}\rho_{2}$ acts trivially on it and $\lambda_{1}:u_{1}'/v_{1}'\mapsto-v_{1}'/u_{1}'$.
We conclude that the fixed subfield $k(u_{1}'/v_{1}')^{G/N}=k(u_{1}'/v_{1}'-v_{1}'/u_{1}')$.
\end{proof}
Consider a variant of Theorem \ref{t1.5}. Let $k$ be a field, $p$
an odd prime number, $k(t,s,x_{0},x_{1},\cdots,x_{p-1})$ a field
satisfying that $\textup{trdeg}_{k}k(t,s,x_{0},x_{1},\cdots,x_{p-1})=p+1$
and $t^{d'}=\prod_{i=0}^{p-1}x_{i}$, where $d'$ is an integer divisible
by $p$. Write $d'=pd_{0}$ and define $X_{i}:=x_{i}/t^{d_{0}}$,
then $\prod_{i=0}^{p-1}X_{i}=1$. Consider the action of the group
$G_{pd}$ in Definition \ref{d2.3}. The rationality problem of $k(t,s,X_{0},X_{1},\cdots,X_{p-1})^{G_{pd}}$
is reduced, up to stable rationality, to the following question.
\begin{question}
\label{q3.1} Let $k$ be a field, $p$ an odd prime number, $k(x_{1},\cdots,x_{p-1})$
the rational function field of $p-1$ variables over $k$. Let $a$
be an integer satisfying that $(\mathbb{Z}/p\mathbb{Z})^{\times}=\langle\bar{a}\rangle$.
Define $k$-automorphisms $\sigma,\tau$ on $k(x_{1},\cdots,x_{p-1})$
by $\sigma:x_{1}\mapsto x_{2}\mapsto\cdots\mapsto x_{p-1}\mapsto1/\prod_{i=1}^{p-1}x_{i}$
and $\tau:x_{i}\mapsto x_{ai}$, where the indices of $x_{i}$ are
taken modulo $p$. For any positive divisor $d$ of $p-1$, write
$p-1=de$ and define the group $G_{pd}:=\langle\sigma,\tau^{e}\rangle$.
We ask under what situations the fixed subfield $k(x_{1},\cdots,x_{p-1})^{G_{pd}}$
is $k$-rational (resp., stably $k$-rational).
\end{question}

Alternatively, we may formulate the above question in terms of $G$-lattices.
The reader may consult Section 2 of \cite{HKY} for unexplained terminology.

Let $p$ be an odd prime number and $a$ an integer satisfying that
$(\mathbb{Z}/p\mathbb{Z})^{\times}=\left\langle \bar{a}\right\rangle $.
Define the group $G$ by $G:=\left\langle \sigma,\tau\right\rangle $,
where $\sigma^{p}=\tau^{p-1}=1$ and $\tau\sigma\tau^{-1}=\sigma^{a}$.
For any positive divisor $d$ of $p-1$, write $p-1=de$ and define
the group $G_{pd}:=\langle\sigma,\tau^{e}\rangle$.

Now define the lattices $N:=\mathbb{Z}[G/\left\langle \tau\right\rangle ]$
and $M:=N/(\mathbb{Z}\sum_{i=0}^{p-1}\sigma^{i})$. In other words,
$0\to\mathbb{Z}\to N\to M\to0$ is a short exact sequence of $G$-lattices.
Note that $N$ may be described explicitly. Write $N':=\oplus_{i=0}^{p-1}\mathbb{Z}e_{i}$
with the $G$-action defined by $\sigma\cdot e_{i}=e_{i+1}$ and $\tau\cdot e_{i}=e_{ai}$.
Then $N$ and $N'$ are isomorphic $G$-lattices.

If $k$ is a field, consider $k(N)^{G_{pd}}$ and $k(M)^{G_{pd}}$
where the group acts trivially on the field $k$. Questions: Is $k(N)^{G_{pd}}$
rational? Is $k(M)^{G_{pd}}$ rational? Is $[M]^{fl}$ a permutation
lattice or an invertible lattice?

We remark that, if $d=1$ (i.e., $G_{p}\cong C_{p}$, the cyclic group
of order $p$), then $k(M)^{G_{p}}$ is stably $k$-rational if and
only if so is $k(G_{p})$, because we may apply Lemma \ref{l2.8}
and \ref{l2.9}.

If $d=2$ (i.e., $G_{2p}\cong D_{p}$, the dihedral group of order
$2p$), then $k(M)^{G_{2p}}$ is stably $k$-rational if and only
if so is $k(G_{2p})$ by \cite[Theorem 5.6]{HKY}.

On the other hand, we don't know the answer whether $k(M)^{G_{pd}}$
is $k$-rational (resp., stably $k$-rational) if $d$ is a divisor
of $p-1$ other than $1$ or $2$. The situation of $k(N)^{G_{pd}}$
is unclear even when $k=\mathbb{Q}$, which was investigated by Breuer
during 1920s \cite{Br}.

\section{\label{sec4}Transitive Subgroups of \texorpdfstring{$\bm{S_{14}}$}{$S_{14}$}}

The transitive subgroups of $S_{14}$, up to conjugation within $S_{14}$,
was classified by \cite{Mi}. It was reconfirmed in \cite{CHM}. The
message was finally integrated into the libraries of \cite{GAP}.
In this paper, we will name these groups as those given in the GAP
library.

The symmetric group $S_{14}$ contains $63$ transitive subgroups.
They are labeled as $G(i)$, where $1\leq i\leq63$. Among the $63$
subgroups, exactly $36$ of them are solvable.

These subgroups are classified into $13$ classes. Groups belonging
to Class $1$\textendash $6$ are solvable, while groups belonging
to Class $7$\textendash $13$ are non-solvable. Note that the classification
into $13$ classes is devised by us, not by the GAP library.

To describe these subgroups, we first define the following elements
in $S_{14}$:
\begin{eqnarray*}
\sigma_{1} & = & (1,3,5,7,9,11,13),\\
\sigma_{2} & = & (2,4,6,8,10,12,14),\\
\tau_{1} & = & (1,2)(3,8,5,14,9,12)(4,7,6,13,10,11),\\
\tau_{2} & = & (3,7,5,13,9,11)(4,8,6,14,10,12),\\
\lambda_{1} & = & (1,2)(3,14,13,4)(5,12,11,6)(7,10,9,8),\\
\lambda_{2} & = & (1,2)(3,4)(5,6)(7,8)(9,10)(11,12)(13,14),\\
\lambda_{3} & = & (3,13)(5,11)(7,9),\\
\lambda_{4} & = & (4,14)(6,12)(8,10),\\
\lambda_{5} & = & (3,5,9)(7,13,11),\\
\lambda_{6} & = & (4,6,10)(8,14,12),\\
\mu_{i} & = & (2i+1,2i+2),\text{ for }0\leq i\leq6,\\
\nu_{1} & = & (1,4)(2,3)(5,6)(7,8)(9,10)(11,12)(13,14),\\
\nu_{2} & = & (1,3)(2,4),\\
\nu_{3} & = & (1,6)(2,5)(3,11)(4,12)(7,8)(9,10).
\end{eqnarray*}

Here is a list of these $63$ subgroups $G(i)$ of $S_{14}$.

The notation $G$ refers to some group $G(i)$ when we don't intend
to specify its numeral. Remember that the groups $G_{21}$ and $G_{42}$
are subgroups of $S_{7}$ in Definition \ref{d2.3}, don't confuse
them with $G(21)$ and $G(42)$. We also remark that in this list,
some groups may be isomorphic as abstract groups (we use the notation
$\simeq$), but they are not conjugate within $S_{14}$. For example,
consider the group $G(17)$ in Class $12$ and the group $G(19)$
in Class $11$. We write $G(17)\simeq C_{2}\times\textup{PSL}_{2}(\mathbb{F}_{7})$
to mean that $G(17)$ is isomorphic to a direct product of $C_{2}$
and $\textup{PSL}_{2}(\mathbb{F}_{7})$ as abstract groups. On the
other hand, we write $G(19)=\textup{PSL}_{2}(\mathbb{F}_{7})\times C_{2}$
to mean that $G(19)$ is defined as a subgroup of $S_{14}$, where
the first factor is a subgroup of $S_{7}$, the second factor is $S_{2}$,
and the identification of the direct product as a subgroup of $S_{14}$
is understood as the way given in Theorem \ref{t2.7}. We adopt similar
convention for groups in Classes $1$, $8$, and $11$ (see the explanation
in Definition \ref{d2.5}, Theorem \ref{t2.6}, and \ref{t2.7}).

\textbf{Class $\bm{1}$.} The direct product and the wreath product
of smaller groups:
\[
\begin{aligned} & G(1)=C_{7}\times C_{2}, &  & G(3)=D_{7}\times C_{2}, &  & G(5)=G_{21}\times C_{2}, &  & G(7)=G_{42}\times C_{2},\\
 & G(8)=C_{7}\wr_{X_{2}}C_{2}, &  & G(20)=D_{7}\wr_{X_{2}}C_{2}, &  & G(26)=G_{21}\wr_{X_{2}}C_{2}, &  & G(45)=G_{42}\wr_{X_{2}}C_{2},\\
 & G(29)=C_{2}\wr_{X_{7}}C_{7}, &  & G(38)=C_{2}\wr_{X_{7}}D_{7}, &  & G(44)=C_{2}\wr_{X_{7}}G_{21}, &  & G(48)=C_{2}\wr_{X_{7}}G_{42}.
\end{aligned}
\]

\textbf{Class $\bm{2}$.} $G$ contains a normal subgroup $N_{7}:=\left\langle \sigma_{1}\sigma_{2}\right\rangle \simeq C_{7}$:
\[
\begin{aligned} & G(2)=N_{7}\rtimes\left\langle \tau_{1}^{3}\right\rangle \simeq D_{7}, &  & G(4)=N_{7}\rtimes\left\langle \tau_{1}\right\rangle \simeq G_{42}.\end{aligned}
\]

\textbf{Class $\bm{3}$.} $G$ contains a normal subgroup $N_{49}:=\left\langle \sigma_{1},\sigma_{2}\right\rangle \simeq C_{7}^{2}$:
\[
\begin{aligned} & G(12)=N_{49}\rtimes\left\langle \lambda_{1}\right\rangle , &  & G(13)=N_{49}\rtimes\left\langle \lambda_{2},\lambda_{3}\lambda_{4}\right\rangle ,\\
 & G(14)=N_{49}\rtimes\left\langle \lambda_{2},\lambda_{5}\lambda_{6}\right\rangle , &  & G(15)=N_{49}\rtimes\left\langle \lambda_{2},\lambda_{5}^{-1}\lambda_{6}\right\rangle ,\\
 & G(22)=N_{49}\rtimes\left\langle \lambda_{1},\lambda_{5}^{-1}\lambda_{6}\right\rangle , &  & G(23)=N_{49}\rtimes\left\langle \lambda_{1},\lambda_{5}\lambda_{6}\right\rangle ,\\
 & G(24)=N_{49}\rtimes\left\langle \lambda_{2},\lambda_{3}\lambda_{4},\lambda_{5}\lambda_{6}\right\rangle , &  & G(25)=N_{49}\rtimes\left\langle \lambda_{2},\lambda_{3}\lambda_{4},\lambda_{5}^{-1}\lambda_{6}\right\rangle ,\\
 & G(31)=N_{49}\rtimes\left\langle \lambda_{2},\lambda_{3},\lambda_{4},\lambda_{5}^{-1}\lambda_{6}\right\rangle , &  & G(32)=N_{49}\rtimes\left\langle \lambda_{2},\lambda_{3},\lambda_{4},\lambda_{5}\lambda_{6}\right\rangle ,\\
 & G(36)=N_{49}\rtimes\left\langle \lambda_{1},\lambda_{5},\lambda_{6}\right\rangle , &  & G(37)=N_{49}\rtimes\left\langle \lambda_{2},\lambda_{3}\lambda_{4},\lambda_{5},\lambda_{6}\right\rangle .
\end{aligned}
\]

\textbf{Class $\bm{4}$.} $G$ contains a normal subgroup $N_{8}:=\left\langle \mu_{0}\mu_{1}\mu_{2}\mu_{5},\mu_{0}\mu_{2}\mu_{3}\mu_{4},\mu_{0}\mu_{1}\mu_{4}\mu_{6}\right\rangle \simeq C_{2}^{3}$:
\[
\begin{aligned} & G(6)=N_{8}\rtimes\left\langle \sigma_{1}\sigma_{2}\right\rangle \simeq C_{2}^{3}\rtimes C_{7}, &  & G(11)=N_{8}\rtimes\left\langle \sigma_{1}\sigma_{2},\tau_{2}^{2}\right\rangle \simeq C_{2}^{3}\rtimes G_{21}.\end{aligned}
\]

\textbf{Class $\bm{5}$.} $G$ contains a normal subgroup $N_{16}:=\left\langle \mu_{0}\mu_{1}\mu_{2}\mu_{5},\mu_{0}\mu_{2}\mu_{3}\mu_{4},\mu_{0}\mu_{1}\mu_{4}\mu_{6},\mu_{1}\mu_{2}\mu_{4}\right\rangle \simeq C_{2}^{4}$:
\[
\begin{aligned} & G(9)=N_{16}\rtimes\left\langle \sigma_{1}\sigma_{2}\right\rangle \simeq C_{2}^{4}\rtimes C_{7}, &  & G(18)=N_{16}\rtimes\left\langle \sigma_{1}\sigma_{2},\tau_{2}^{2}\right\rangle \simeq C_{2}^{4}\rtimes G_{21}.\end{aligned}
\]

\textbf{Class $\bm{6}$.} $G$ contains a normal subgroup $N_{64}:=\left\langle \mu_{0}\mu_{1},\mu_{0}\mu_{2},\mu_{0}\mu_{3},\mu_{0}\mu_{4},\mu_{0}\mu_{5},\mu_{0}\mu_{6}\right\rangle \simeq C_{2}^{6}$:
\[
\begin{aligned} & G(21)=N_{64}\rtimes\left\langle \sigma_{1}\sigma_{2}\right\rangle \simeq C_{2}^{6}\rtimes C_{7}, &  & G(35)=N_{64}\rtimes\left\langle \sigma_{1}\sigma_{2},\tau_{2}^{2}\right\rangle \simeq C_{2}^{6}\rtimes G_{21},\\
 & G(27)=N_{64}\rtimes\left\langle \sigma_{1}\sigma_{2},\tau_{1}^{3}\right\rangle \simeq C_{2}^{6}\rtimes D_{7}, &  & G(28)=N_{64}\rtimes\left\langle \sigma_{1}\sigma_{2},\tau_{2}^{3}\right\rangle \simeq C_{2}^{6}\rtimes D_{7},\\
 & G(40)=N_{64}\rtimes\left\langle \sigma_{1}\sigma_{2},\tau_{1}\right\rangle \simeq C_{2}^{6}\rtimes G_{42}, &  & G(41)=N_{64}\rtimes\left\langle \sigma_{1}\sigma_{2},\tau_{2}\right\rangle \simeq C_{2}^{6}\rtimes G_{42}.
\end{aligned}
\]

\textbf{Class $\bm{7}$.} $G$ is the whole group $S_{14}$:
\[
\begin{aligned} & G(63)=S_{14}.\end{aligned}
\]

\textbf{Class $\bm{8}$.} The direct product and the wreath product
of smaller groups:
\[
\begin{aligned} & G(49)=S_{7}\times C_{2}, &  & G(61)=S_{7}\wr_{X_{2}}C_{2}, &  & G(57)=C_{2}\wr_{X_{7}}S_{7}.\end{aligned}
\]

\textbf{Class $\bm{9}$.} $G$ contains the normal subgroup $N_{64}$
defined in Class $6$:
\[
\begin{aligned} & G(54)=N_{64}\rtimes\left\langle \sigma_{1}\sigma_{2},\nu_{1}\right\rangle \simeq C_{2}^{6}\rtimes S_{7}, &  & G(55)=N_{64}\rtimes\left\langle \sigma_{1}\sigma_{2},\nu_{2}\right\rangle \simeq C_{2}^{6}\rtimes S_{7}.\end{aligned}
\]

\textbf{Class $\bm{10}$.} $G$ is isomorphic to $\textup{PSL}_{2}(\mathbb{F}_{7})$:
\[
\begin{aligned} & G(10)=\left\langle \sigma_{1}\sigma_{2},\nu_{3}\right\rangle \simeq\textup{PSL}_{2}(\mathbb{F}_{7}).\end{aligned}
\]

\textbf{Class $\bm{11}$.} The direct product and the wreath product
of smaller groups:
\[
\begin{aligned} & G(19)=\textup{PSL}_{2}(\mathbb{F}_{7})\times C_{2}, &  & G(52)=\textup{PSL}_{2}(\mathbb{F}_{7})\wr_{X_{2}}C_{2}, &  & G(51)=C_{2}\wr_{X_{7}}\textup{PSL}_{2}(\mathbb{F}_{7}),\\
 & G(47)=A_{7}\times C_{2}, &  & G(58)=A_{7}\wr_{X_{2}}C_{2}, &  & G(56)=C_{2}\wr_{X_{7}}A_{7}.
\end{aligned}
\]

\textbf{Class $\bm{12}$.} $G$ contains a normal elementary abelian
$2$-subgroup:
\[
\begin{aligned} & G(17)\simeq C_{2}\times\textup{PSL}_{2}(\mathbb{F}_{7}),\\
 & G(33)\text{ is an extension of }\textup{PSL}_{2}(\mathbb{F}_{7})\text{ by }C_{2}^{3}, &  & G(34)\simeq C_{2}^{3}\rtimes\textup{PSL}_{2}(\mathbb{F}_{7}),\\
 & G(42)\text{ is an extension of }\textup{PSL}_{2}(\mathbb{F}_{7})\text{ by }C_{2}^{4}, &  & G(43)\simeq C_{2}^{4}\rtimes\textup{PSL}_{2}(\mathbb{F}_{7}),\\
 & G(50)\simeq C_{2}^{6}\rtimes\textup{PSL}_{2}(\mathbb{F}_{7}), &  & G(53)\simeq C_{2}^{6}\rtimes A_{7}.
\end{aligned}
\]

\textbf{Class $\bm{13}$.} $G$ does not contain any solvable normal
subgroups:
\[
\begin{aligned} & G(16)\simeq\textup{PGL}_{2}(\mathbb{F}_{7}), &  & G(30)\simeq\textup{PSL}_{2}(\mathbb{F}_{13}), &  & G(39)\simeq\textup{PGL}_{2}(\mathbb{F}_{13}),\\
 & G(46)\simeq S_{7}, &  & G(59)\simeq A_{7}^{2}\rtimes C_{4}, &  & G(60)\simeq A_{7}^{2}\rtimes C_{2}^{2}, &  & G(62)=A_{14}.
\end{aligned}
\]

The following theorem is a strengthened version of Theorem \ref{t1.8}.
\begin{thm}
\label{t4.1} Let $k$ be a field and $G$ a transitive subgroup of
$S_{14}$. Then $k(x_{1},\cdots,x_{14})^{G}$ is k-rational provided
that
\begin{enumerate}
\item $G=G(i)$ belongs to Classes $1$, $2$, $7$, and $8$;
\item $\textup{char}(k)=7$ and $G=G(i)$ belongs to Class $3$;
\item $\textup{char}(k)\neq2$ and $G=G(i)$ belongs to Classes $4$, $5$,
$6$, and $9$;
\item $k\supseteq\mathbb{Q}(\sqrt{-7})$ and $G=G(i)$ belongs to Class
$10$.
\end{enumerate}
\end{thm}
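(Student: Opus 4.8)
The overall plan is to treat the thirteen classes of Section~\ref{sec4} according to their structure. Classes~1, 7, and~8 consist of direct and wreath products of the groups $C_7, D_7, G_{21}, G_{42}, S_7$ (on seven letters) and $C_2$ (on two), so I would simply invoke Theorems~\ref{t2.6} and~\ref{t2.7}, the constituents all having $k$-rational fixed fields over every $k$ by Theorem~\ref{t1.1}(iii) and the symmetric-function case. For Class~2, where $G(2)\simeq D_7$ (a regular representation on $14$ points) and $G(4)\simeq G_{42}$ act transitively through the normal subgroup $N_7\simeq C_7$, I would split off the cyclic normal subgroup: in characteristic~$7$ this is the modular situation handled by Theorem~\ref{t1.3} (or Gasch\"utz, Theorem~\ref{t1.2}) followed by Theorems~\ref{t2.1} and~\ref{t2.2} for the quotient, and the remaining characteristics rest on the known non-modular rationality of these small groups.

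The modular heart is Class~3 with $\mathrm{char}(k)=7$. Here I would relabel the two $N_{49}$-orbits (odd and even indices) as $x_0,\dots,x_6$ and $y_0,\dots,y_6$, so that $\sigma_1,\sigma_2$ become the cyclic shifts of Definition~\ref{d1.6}. A direct check then identifies the remaining generators: $\lambda_1,\lambda_2$ of Section~\ref{sec4} are exactly the automorphisms of the same names in Definition~\ref{d1.6}, while $\lambda_3,\lambda_4$ realize $\rho_1,\rho_2$ at $a=-1$ and $\lambda_5,\lambda_6$ realize $\rho_1,\rho_2$ at $a=2$. Every one of the twelve Class-3 groups is thereby of the form $\langle\sigma_1,\sigma_2\rangle\rtimes H$ with $H\le\langle\lambda_1,\lambda_2,\rho_1,\rho_2\rangle$, and I would rerun the proof of Theorem~\ref{t1.7}: the substitution $u_i,v_i$ together with Theorem~\ref{t2.1} reduces matters to $k(u_0,v_0,u_1,v_1)^G$, passing to $\langle\sigma_1,\sigma_2\rangle$-invariants yields $u_1',v_1'$ on which the quotient acts by signed swaps and scalars, and Theorem~\ref{t2.2} with L\"uroth closes the argument on the single ratio $u_1'/v_1'$. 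Crucially that proof uses only that the quotient acts monomially on $(u_1',v_1')$, which holds for all twelve groups and not just the four stated in Theorem~\ref{t1.7}.

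Classes~4, 5, 6, and~9 are non-modular ($\mathrm{char}(k)\neq2$) and share one pipeline. In each, the seven pairs $\{2i+1,2i+2\}$ form a block system, so I would diagonalize by $u_i:=x_{2i+1}+x_{2i+2}$ and $w_i:=x_{2i+1}-x_{2i+2}$; then $G$ acts on the $w_i$ by a faithful signed permutation and on the $u_i$ by the induced permutation $Q\le S_7$ of the blocks, where $Q\in\{C_7,D_7,G_{21},G_{42},S_7\}$. Since the $u_i$ transform linearly over $L:=k(w_0,\dots,w_6)$, Theorem~\ref{t2.1} yields $k(x_1,\dots,x_{14})^G=k(w_0,\dots,w_6)^G(z_1,\dots,z_7)$ with $G$-fixed $z_j$, reducing everything to the monomial action on the $w_i$ while leaving seven spare variables. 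Taking invariants under the elementary abelian $2$-subgroup $N$ (for Classes~6 and~9, $N=N_{64}$ gives $k(W_0,\dots,W_6,P)$ with $W_i=w_i^2$, $P=\prod_i w_i$ and $P^2=\prod_i W_i$), I would spend one $z_j$ as the fixed variable $s$ and apply Theorem~\ref{t1.5} with $d=2$, $n=7$ (the sign character $\chi(\sigma)=\prod_i\epsilon_i(\sigma)$ satisfies $\chi^2=1$), stripping off $P$ to reach the degree-$7$ permutation problem $k(\cdots)^Q$; this is $k$-rational by Theorem~\ref{t1.1}(iii) or Theorem~\ref{t2.4}. For Class~10 the same pair-diagonalization applies: the seven pairs form a block system on which $G(10)\simeq\mathrm{PSL}_2(\mathbb{F}_7)$ acts by the Fano representation, and the reduction through the squares $w_i^2$ leads to $k(x_0,\dots,x_6)^{\mathrm{PSL}_2(\mathbb{F}_7)}$, which is $k$-rational exactly when $k\supseteq\mathbb{Q}(\sqrt{-7})$ by the final clause of Theorem~\ref{t1.1}(iii).

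The steps I expect to resist are twofold. For Classes~4 and~5 the subgroups $N_8\simeq C_2^3$ and $N_{16}\simeq C_2^4$ are \emph{proper} in the full swap group, so $k(w_0,\dots,w_6)^N$ is generated by the squares $w_i^2$ together with the monomials indexed by the dual code $N^{\perp}$ and does not collapse to a single relation $P^2=\prod_i W_i$; fitting this richer invariant structure to Theorem~\ref{t1.5} (and confirming that the induced character still satisfies $\chi^d=1$) requires genuine care. For Class~10 one must first verify that the seven pairs really form a block system with trivial within-block kernel, so that $G(10)\le C_2\wr_{X_7}\mathrm{PSL}_2(\mathbb{F}_7)=G(51)$, and then track the sign twists through the reduction to the Fano problem, which is presumably where the quadratic obstruction measured by $\mathbb{Q}(\sqrt{-7})$ enters. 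The purely combinatorial identifications in Classes~2 and~3 are routine by comparison.
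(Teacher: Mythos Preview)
Your overall architecture matches the paper closely for Classes~1, 3, 7, 8 and for Classes~4--6, 9 (where the paper, like you, passes to the differences $y_i=x_{2i+1}-x_{2i+2}$, applies Theorem~\ref{t2.1}, and then fits the $N$-invariants into Theorem~\ref{t1.5}). Your caution about Classes~4 and~5 is well placed but resolvable: the paper computes the $N_8$-invariants explicitly as $t=\prod_i y_i$ together with $z_j=y_j y_{j+3} y_{j+5} y_{j+6}$ (the quadratic-residue monomials), checks $t^{4}=\prod_j z_j$, and invokes Theorem~\ref{t1.5} with $(n,d)=(7,4)$; for $N_{16}$ one takes $t=\prod_i y_i^{2}$ and the same $z_j$, obtaining $t^{2}=\prod_j z_j$ and $(n,d)=(7,2)$. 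The induced characters are trivial in both cases, so $\chi^{d}=1$ is automatic.

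Where your plan diverges, and goes astray, is Classes~2 and~10. For Class~10 there is \emph{no} normal $2$-subgroup in the simple group $G(10)\simeq\mathrm{PSL}_2(\mathbb{F}_7)$, so ``the reduction through the squares $w_i^{2}$'' has nothing to quotient by; your later remark about the trivial within-block kernel is the actual key. The paper exploits exactly this, but in the opposite direction from your pipeline: it takes the \emph{sums} $y_i=x_{2i+1}+x_{2i+2}$ rather than the differences, observes that the induced action of $G$ on $k(y_0,\dots,y_6)$ is already a faithful degree-$7$ permutation action, and applies Theorem~\ref{t2.1} once to collapse the $14$-variable problem to $k(y_0,\dots,y_6)^{G}$. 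This handles $G(2)$, $G(4)$, $G(10)$ uniformly, over every field $k$, with no sign-twists to track and no appeal to Kuniyoshi--Gasch\"utz; the condition $k\supseteq\mathbb{Q}(\sqrt{-7})$ enters only through the resulting $7$-variable $\mathrm{PSL}_2(\mathbb{F}_7)$ problem via Theorem~\ref{t1.1}(iii). Your Class-2 plan (split off $N_7\simeq C_7$, then invoke ``known non-modular rationality'') is both vaguer and unnecessary---note in particular that $G(4)\simeq G_{42}$ has order $42$, so its $14$-point action is not the regular representation and no off-the-shelf result covers it directly.
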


We remark that we don't know whether $k(x_{1},\cdots,x_{14})^{G}$
is $k$-rational if $G=G(i)$ belongs to Classes $11$\textendash $13$.

\section{\label{sec5}Proof of Theorem \ref{t4.1}}

\subsection{\label{sec5.1}\texorpdfstring{$\bm{G=G(i)}$}{$G=G(i)$} belonging
to Classes \texorpdfstring{$\bm{1}$}{$1$}, \texorpdfstring{$\bm{7}$}{$7$},
and \texorpdfstring{$\bm{8}$}{$8$}}
\begin{proof}
Apply Theorem \ref{t1.1}, \ref{t2.6}, and \ref{t2.7}. For example,
consider $G(48)=C_{2}\wr_{X_{7}}G_{42}$. Note that $G_{42}$ is a
subgroup of $S_{7}$ by Definition \ref{d2.3}. We may identify $C_{2}$
with the symmetric group $S_{2}$. By Definition \ref{d2.5}, $G(48)$
may be regarded as a subgroup of $S_{14}$. Now apply Theorem \ref{t1.1}
and \ref{t2.6}.
\end{proof}

\subsection{\label{sec5.2}\texorpdfstring{$\bm{G=G(i)}$}{$G=G(i)$} belonging
to Classes \texorpdfstring{$\bm{2}$}{$2$} and \texorpdfstring{$\bm{10}$}{$10$}}
\begin{proof}
Write $\sigma:=\sigma_{1}\sigma_{2}$. Define $y_{i}:=x_{2i+1}+x_{2i+2}$
for $0\leq i\leq6$.

To show that $k(x_{1},\cdots,x_{14})^{G}$ is $k$-rational, we apply
Theorem \ref{t2.1}. Since $G$ is faithful on $k(y_{0},y_{1},\cdots,y_{6})$,
it suffices to show that $k(y_{0},y_{1},\cdots,y_{6})^{G}$ is $k$-rational.

It is routine to check that
\[
\begin{aligned} & \sigma:y_{i}\mapsto y_{i+1}, &  & \tau_{1}:y_{i}\mapsto y_{3i}, &  & \nu_{3}:y_{0}\leftrightarrow y_{2},y_{1}\leftrightarrow y_{5},y_{i}\mapsto y_{i},\text{ for }i=3,4,6,\end{aligned}
\]
where the indices of $y_{i}$ are taken modulo $7$.

The rationality of $k(y_{0},y_{1},\cdots,y_{6})^{G}$ follows from
Theorem \ref{t1.1}.
\end{proof}

\subsection{\label{sec5.3}\texorpdfstring{$\bm{G=G(i)}$}{$G=G(i)$} belonging
to Class \texorpdfstring{$\bm{3}$}{$3$}}
\begin{proof}
We rename the original variables $\{x_{1},\cdots,x_{14}\}$ by the
new names $\{x_{0},y_{0},x_{1},y_{1},\cdots,x_{6},y_{6}\}$. Then
\[
\begin{aligned} & \sigma_{1}:x_{i}\mapsto x_{i+1}, &  & \sigma_{2}:y_{i}\mapsto y_{i+1}, &  & \lambda_{1}:x_{i}\mapsto y_{-i},y_{i}\mapsto x_{i}, &  & \lambda_{2}:x_{i}\leftrightarrow y_{i},\\
 & \lambda_{3}:x_{i}\mapsto x_{-i}, &  & \lambda_{4}:y_{i}\mapsto y_{-i}, &  & \lambda_{5}:x_{i}\mapsto x_{2i}, &  & \lambda_{6}:y_{i}\mapsto y_{2i}.
\end{aligned}
\]
Note that
\[
\begin{aligned} & \lambda_{3}=\rho_{1}\text{ for }a=-1, &  & \lambda_{4}=\rho_{2}\text{ for }a=-1, &  & \lambda_{5}=\rho_{1}\text{ for }a=2, &  & \lambda_{6}=\rho_{2}\text{ for }a=2.\end{aligned}
\]
The first six cases $G=G(i)$ where $i=12,13,14,15,22,23$ are covered
by Theorem \ref{t1.7}. More specifically,
\[
\begin{cases}
G(12)\text{ corresponds the first (and the second) group for }(p,a)=(7,-1),\\
G(13)\text{ corresponds the third (and the fourth) group for }(p,a)=(7,-1),\\
G(14)\text{ corresponds the third group for }(p,a)=(7,2),\\
G(15)\text{ corresponds the fourth group for }(p,a)=(7,2),\\
G(22)\text{ corresponds the second group for }(p,a)=(7,2),\\
G(23)\text{ corresponds the first group for }(p,a)=(7,2).
\end{cases}
\]
For the remaining cases, i.e., $G=G(i)$ where $i=24,25,31,32,36,37$,
although we cannot apply Theorem \ref{t1.7} directly, the proofs
are almost the same because these groups are defined by the similar
fashion as before. Here are the indications:
\[
\begin{cases}
G(24)=\left\langle \sigma_{1},\sigma_{2},\lambda_{2},\rho_{1}\rho_{2}\text{ for }a=-1,\rho_{1}\rho_{2}\text{ for }a=2\right\rangle ,\\
G(25)=\left\langle \sigma_{1},\sigma_{2},\lambda_{2},\rho_{1}\rho_{2}\text{ for }a=-1,\rho_{1}^{-1}\rho_{2}\text{ for }a=2\right\rangle ,\\
G(31)=\left\langle \sigma_{1},\sigma_{2},\lambda_{2},\rho_{1}\text{ for }a=-1,\rho_{2}\text{ for }a=-1,\rho_{1}^{-1}\rho_{2}\text{ for }a=2\right\rangle ,\\
G(32)=\left\langle \sigma_{1},\sigma_{2},\lambda_{2},\rho_{1}\text{ for }a=-1,\rho_{2}\text{ for }a=-1,\rho_{1}\rho_{2}\text{ for }a=2\right\rangle ,\\
G(36)=\left\langle \sigma_{1},\sigma_{2},\lambda_{1},\rho_{1}\text{ for }a=2,\rho_{2}\text{ for }a=2\right\rangle ,\\
G(37)=\left\langle \sigma_{1},\sigma_{2},\lambda_{2},\rho_{1}\rho_{2}\text{ for }a=-1,\rho_{1}\text{ for }a=2,\rho_{2}\text{ for }a=2\right\rangle .
\end{cases}
\]
\end{proof}

\subsection{\label{sec5.4}\texorpdfstring{$\bm{G=G(i)}$}{$G=G(i)$} belonging
to Classes \texorpdfstring{$\bm{4}$}{$4$}, \texorpdfstring{$\bm{5}$}{$5$},
\texorpdfstring{$\bm{6}$}{$6$}, and \texorpdfstring{$\bm{9}$}{$9$}}
\begin{proof}
We will work on the field $k$ with $\textup{char}(k)\neq2$.

Write $\sigma:=\sigma_{1}\sigma_{2}$. Define $y_{i}:=x_{2i+1}-x_{2i+2}$
for $0\leq i\leq6$.

To show that $k(x_{1},\cdots,x_{14})^{G}$ is $k$-rational, we apply
Theorem \ref{t2.1} again. Since $G$ is faithful on $k(y_{0},y_{1},\cdots,y_{6})$,
it suffices to show that $k(y_{0},y_{1},\cdots,y_{6})^{G}(s)$ is
$k$-rational (note that we add an extra variable $s$).

It is routine to check that
\[
\begin{aligned} & \mu_{i}:y_{i}\mapsto-y_{i}, &  & \sigma:y_{i}\mapsto y_{i+1},\\
 & \tau_{1}:y_{i}\mapsto-y_{3i}, &  & \tau_{2}:y_{i}\mapsto y_{3i},\\
 & \nu_{1}:y_{0}\leftrightarrow-y_{1},y_{i}\mapsto-y_{i},\text{ for }2\leq i\leq6, &  & \nu_{2}:y_{0}\leftrightarrow y_{1},y_{i}\mapsto y_{i},\text{ for }2\leq i\leq6,
\end{aligned}
\]
where the indices of $y_{i}$ are taken modulo $7$.

We claim that
\begin{eqnarray*}
k(y_{0},y_{1},\cdots,y_{6})^{N_{8}} & = & k(\prod_{i=0}^{6}y_{i},y_{j}y_{j+3}y_{j+5}y_{j+6}:1\leq j\leq6),\\
k(y_{0},y_{1},\cdots,y_{6})^{N_{16}} & = & k(\prod_{i=0}^{6}y_{i}^{2},y_{j}y_{j+3}y_{j+5}y_{j+6}:1\leq j\leq6),\\
k(y_{0},y_{1},\cdots,y_{6})^{N_{64}} & = & k(\prod_{i=0}^{6}y_{i},y_{j}^{2}:1\leq j\leq6),
\end{eqnarray*}
because all of the generators on the right-hand side are fixed by
the corresponding group and the determinants of the coefficient matrices
are given by
\[
\begin{vmatrix}1 & 1 & 1 & 1 & 1 & 1 & 1\\
1 & 1 & 0 & 0 & 1 & 0 & 1\\
1 & 1 & 1 & 0 & 0 & 1 & 0\\
0 & 1 & 1 & 1 & 0 & 0 & 1\\
1 & 0 & 1 & 1 & 1 & 0 & 0\\
0 & 1 & 0 & 1 & 1 & 1 & 0\\
0 & 0 & 1 & 0 & 1 & 1 & 1
\end{vmatrix}=8,\begin{vmatrix}2 & 2 & 2 & 2 & 2 & 2 & 2\\
1 & 1 & 0 & 0 & 1 & 0 & 1\\
1 & 1 & 1 & 0 & 0 & 1 & 0\\
0 & 1 & 1 & 1 & 0 & 0 & 1\\
1 & 0 & 1 & 1 & 1 & 0 & 0\\
0 & 1 & 0 & 1 & 1 & 1 & 0\\
0 & 0 & 1 & 0 & 1 & 1 & 1
\end{vmatrix}=16,\begin{vmatrix}1 & 1 & 1 & 1 & 1 & 1 & 1\\
0 & 2 & 0 & 0 & 0 & 0 & 0\\
0 & 0 & 2 & 0 & 0 & 0 & 0\\
0 & 0 & 0 & 2 & 0 & 0 & 0\\
0 & 0 & 0 & 0 & 2 & 0 & 0\\
0 & 0 & 0 & 0 & 0 & 2 & 0\\
0 & 0 & 0 & 0 & 0 & 0 & 2
\end{vmatrix}=64.
\]
\begin{itemize}
\item $G=G(i)$ belonging to Class $4$: Define $t:=\prod_{i=0}^{6}y_{i}$
and $z_{i}:=y_{i}y_{i+3}y_{i+5}y_{i+6}$ for $0\leq i\leq6$. We find
that $t^{4}=\prod_{i=0}^{6}z_{i}$, and
\[
\begin{aligned} & k(y_{0},y_{1},\cdots,y_{6})^{G}(s)=k(t,s,z_{0},z_{1},\cdots,z_{6})^{G/N_{8}}, &  & \sigma:t\mapsto t,z_{i}\mapsto z_{i+1}, &  & \tau_{2}^{2}:t\mapsto t,z_{i}\mapsto z_{2i},\end{aligned}
\]
where the indices of $z_{i}$ are taken modulo $7$.
\item $G=G(i)$ belonging to Class $5$: Define $t:=\prod_{i=0}^{6}y_{i}^{2}$
and $z_{i}:=y_{i}y_{i+3}y_{i+5}y_{i+6}$ for $0\leq i\leq6$. We find
that $t^{2}=\prod_{i=0}^{6}z_{i}$, and
\[
\begin{aligned} & k(y_{0},y_{1},\cdots,y_{6})^{G}(s)=k(t,s,z_{0},z_{1},\cdots,z_{6})^{G/N_{16}}, &  & \sigma:t\mapsto t,z_{i}\mapsto z_{i+1}, &  & \tau_{2}^{2}:t\mapsto t,z_{i}\mapsto z_{2i},\end{aligned}
\]
where the indices of $z_{i}$ are taken modulo $7$.
\item $G=G(i)$ belonging to Classes $6$ and $9$: Define $t:=\prod_{i=0}^{6}y_{i}$
and $z_{i}:=y_{i}^{2}$ for $0\leq i\leq6$. We find that $t^{2}=\prod_{i=0}^{6}z_{i}$,
and
\[
\begin{aligned} & k(y_{0},y_{1},\cdots,y_{6})^{G}(s)=k(t,s,z_{0},z_{1},\cdots,z_{6})^{G/N_{64}}, &  & \sigma:t\mapsto t,z_{i}\mapsto z_{i+1},\\
 & \tau_{1}:t\mapsto-t,z_{i}\mapsto z_{3i}, &  & \tau_{2}:t\mapsto t,z_{i}\mapsto z_{3i},\\
 & \nu_{1}:t\mapsto-t,z_{0}\leftrightarrow z_{1},z_{i}\mapsto z_{i},\text{ for }2\leq i\leq6, &  & \nu_{2}:t\mapsto t,z_{0}\leftrightarrow z_{1},z_{i}\mapsto z_{i},\text{ for }2\leq i\leq6,
\end{aligned}
\]
where the indices of $z_{i}$ are taken modulo $7$.
\end{itemize}
The rationality of $k(y_{0},y_{1},\cdots,y_{6})^{G}(s)=k(t,s,z_{0},z_{1},\cdots,z_{6})^{G/N}$
(where $N=N_{8},N_{16},N_{64}$) follows from Theorem \ref{t1.5}.
\end{proof}

\subsection{\label{sec5.5}A New Proof of Theorem \ref{t2.4}}

In the following, we will give a new proof of Theorem \ref{t2.4}
other than that in \cite[Theorem 3.2]{KW}, because similar techniques
will be useful in many situations when $\textup{char}(k)=p>0$.
\begin{proof}
[Proof of Theorem \ref{t2.4}] Recall that $(\mathbb{Z}/p\mathbb{Z})^{\times}=\langle\bar{a}\rangle$,
$\sigma:x_{i}\mapsto x_{i+1}$, $\tau:x_{i}\mapsto x_{ai}$, $p-1=de$,
and $G_{pd}:=\left\langle \sigma,\tau^{e}\right\rangle $, where the
indices of $x_{i}$ are taken modulo $p$. We will show that $k(x_{0},x_{1},\cdots,x_{p-1})^{G_{pd}}$
is $k$-rational.

Define 
\[
u:=\sum_{i=0}^{d-1}a^{ei}\tau^{ei}\cdot x_{0}.
\]
We find that 
\[
\tau^{e}\cdot u=a^{-e}u.
\]
Define 
\[
y:=\sum_{i=0}^{p-1}\sigma^{i}\cdot u\qquad\text{ and }\qquad z:=\sum_{i=0}^{p-1}i\sigma^{i}\cdot u.
\]
Then we find that $\sigma:y\mapsto y,z\mapsto z-y$, and $\tau^{e}:y\mapsto a^{-e}y,z\mapsto a^{-2e}z$.

To show that $k(x_{0},x_{1},\cdots,x_{p-1})^{G_{pd}}$ is $k$-rational,
we apply Theorem \ref{t2.1} because $G_{pd}$ is faithful on $k(y,z)$.
It remains to prove that $k(y,z)^{G_{pd}}$ is $k$-rational. By applying
Theorem \ref{t2.2} to $k(y,z)=k(y)(z)$, the problem is reduced to
the rationality of $k(y)^{G_{pd}}$, which is the case because of
L\"uroth's theorem. Explicitly, we know that $k(y)^{G_{pd}}=k(y^{d})$
because $\sigma\cdot y=y$ and $\tau^{e}\cdot y=a^{-e}y$.
\end{proof}
For the groups $G(2)$ and $G(4)$ in Class $2$, if we assume that
$\textup{char}(k)=7$, here is another proof of the rationality by
using the idea in the above proof.

For simplicity, write $\sigma:=\sigma_{1}\sigma_{2}$ and $\tau:=\tau_{1}$,
and the field we consider is $k(x_{1},\cdots,x_{14})$.
\begin{itemize}
\item $G=G(2)$: Define $u:=x_{1}-\tau^{3}\cdot x_{1}$, $y:=\sum_{i=0}^{6}\sigma^{i}\cdot u$,
and $z:=\sum_{i=0}^{6}i\sigma^{i}\cdot u$. Then $\sigma:y\mapsto y,z\mapsto z-y$,
and $\tau^{3}:y\mapsto-y,z\mapsto z$.
\item $G=G(4)$: Define $u:=\sum_{i=0}^{5}5^{i}\tau^{i}\cdot x_{1}$, $y:=\sum_{i=0}^{6}\sigma^{i}\cdot u$,
and $z:=\sum_{i=0}^{6}i\sigma^{i}\cdot u$. Then $\sigma:y\mapsto y,z\mapsto z-y$,
and $\tau:y\mapsto3y,z\mapsto z$.
\end{itemize}
The remaining proof is the same and therefore omitted.

\addcontentsline{toc}{chapter}{References}

Hang Fu (\href{mailto:fu@ncts.ntu.edu.tw}{fu@ncts.ntu.edu.tw})\\
National Center for Theoretical Sciences, National Taiwan University,
Taipei, Taiwan

Ming-chang Kang (\href{mailto:kang@math.ntu.edu.tw}{kang@math.ntu.edu.tw})\\
Department of Mathematics, National Taiwan University, Taipei, Taiwan

Baoshan Wang (\href{mailto:bwang@buaa.edu.cn}{bwang@buaa.edu.cn})\\
School of Mathematics and System Sciences, Beihang University, Beijing,
China

\lyxaddress{Jian Zhou (\href{mailto:zhjn@math.pku.edu.cn}{zhjn@math.pku.edu.cn})\\
School of Mathematical Sciences, Peking University, Beijing, China}

\begin{thebibliography}{AHK}
\bibitem[AHK]{AHK} H. Ahmad, M. Hajja, and M. Kang, Rationality of
some projective linear actions. J. Algebra 228 (2000), 643\textendash 658.

\bibitem[Br]{Br} S. Breuer, Metazyklische Minimalbasis und komplexe
Primzahlen. J. Reine Angew. Math. 156 (1927), 13\textendash 42.

\bibitem[CHM]{CHM} J. Conway, A. Hulpke, and J. McKay, On transitive
permutation groups. LMS J. Comput. Math. 1 (1998), 1\textendash 8.

\bibitem[EM]{EM} S. Endo and T. Miyata, Invariants of finite abelian
groups. J. Math. Soc. Japan 25 (1973), 7\textendash 26.

\bibitem[GAP]{GAP} The GAP Group, GAP \textendash{} Groups, Algorithms,
and Programming, Version 4.8.10; 2018. (\href{https://www.gap-system.org}{https://www.gap-system.org})

\bibitem[Ga]{Ga} W. Gasch\"utz, Fixk\"orper von $p$-Automorphismengruppen
rein-transzendener K\"orpererweiterungen von $p$-Charakteristik.
Math. Z. 71 (1959), 466\textendash 468.

\bibitem[Ha1]{Ha1} M. Hajja, A note on monomial automorphisms. J.
Algebra 85 (1983), 243\textendash 250.

\bibitem[Ha2]{Ha2} M. Hajja, A note on a result of Kuniyoshi. J.
Algebra 92 (1985), 171\textendash 175.

\bibitem[HK1]{HK1} M. Hajja and M. Kang, Three-dimensional purely
monomial actions. J. Algebra 170 (1994), 805\textendash 860.

\bibitem[HK2]{HK2} M. Hajja and M. Kang, Some actions of symmetric
groups. J. Algebra 177 (1995), 511\textendash 535.

\bibitem[HKY]{HKY} A. Hoshi, M. Kang, and A. Yamasaki, Rationality
problems for relation modules of dihedral groups. J. Algebra 530 (2019),
368\textendash 401.

\bibitem[KP]{KP} M. Kang and B. Plans, Reduction theorems for Noether's
problem. Proc. Amer. Math. Soc. 137 (2009), 1867\textendash 1874.

\bibitem[KW]{KW} M. Kang and B. Wang, Rational invariants for subgroups
of $S_{5}$ and $S_{7}$. J. Algebra 413 (2014), 345\textendash 363.

\bibitem[KWZ]{KWZ} M. Kang, B. Wang, and J. Zhou, Invariants of wreath
products and subgroups of $S_{6}$. Kyoto J. Math. 55 (2015), 257\textendash 279.

\bibitem[Ku1]{Ku1} H. Kuniyoshi, On purely-transcendency of a certain
field. T\^ohoku Math. J. (2) 6 (1954), 101\textendash 108.

\bibitem[Ku2]{Ku2} H. Kuniyoshi, On a problem of Chevalley. Nagoya
Math. J. 8 (1955), 65\textendash 67.

\bibitem[Ku3]{Ku3} H. Kuniyoshi, Certain subfields of rational function
fields. Proceedings of the international symposium on algebraic number
theory, Tokyo \& Nikko, 1955, pp. 241\textendash 243. Science Council
of Japan, Tokyo, 1956.

\bibitem[MT]{MT} Yu. I. Manin and M. A. Tsfasman, Rational varieties:
algebra, geometry, arithmetic. Russian Math. Surveys 41 (1986), 51\textendash 116.

\bibitem[Mi]{Mi} G. A. Miller, On the transitive substitution groups
of degree thirteen and fourteen, Quart. J. Pure Appl. Math. 29 (1898),
224\textendash 249.

\bibitem[Sa]{Sa} D. J. Saltman, Generic Galois extensions and problems
in field theory. Adv. in Math. 43 (1982), 250\textendash 283.

\bibitem[Sw]{Sw} R. G. Swan, Noether's problem in Galois theory.
Emmy Noether in Bryn Mawr, 21\textendash 40, Springer, New York-Berlin,
1983.

\bibitem[WW]{WW} B. Wang and G. Wang, Rationality problem for some
transitive subgroups of $S_{14}$. Comm. Algebra 48 (2020), 917\textendash 929.

\bibitem[WZ]{WZ} B. Wang and J. Zhou, Rationality problem for transitive
subgroups of $S_{8}$. J. Algebra 518 (2019), 272\textendash 303.
\end{thebibliography}
\end{document}